\newcommand{\arxiv}[1]{\url{http://arxiv.org/abs/#1}}
\def\a{{\bf a}}
\def\aa{{\bf a}}
\def\ee{{\bf e}}
\def\vv{{\rm v}}
\newcommand{\F}{\mathcal{F}}
\title{Flow polytopes of partitions}
\author{Karola M\'esz\'aros}\address{Department of Mathematics, Cornell University, Ithaca NY}
\email{karola@math.cornell.edu}
\author{Connor Simpson}\address{Department of Mathematics, Cornell University, Ithaca NY}
\email{cgs93@cornell.edu}
\author{Zoe Wellner}\address{Department of Mathematics, Cornell University, Ithaca NY}
\email{zaw5@cornell.edu}
\thanks{M\'esz\'aros is partially supported by a National Science Foundation Grant  (DMS 1501059).}
\definecolor{darkgreen}{rgb}{0,0.7,0}
\definecolor{purplish}{rgb}{0.5,0,0.8}
\definecolor{cobalt}{rgb}{0.0, 0.28, 0.67}
\definecolor{auburn}{rgb}{0.43, 0.21, 0.1}
\newcommand{\vol}{\operatorname{vol}}
\newcommand{\CT}{\operatorname{CT}}
\newcommand{\Z}{{\mathbb{Z}}}
\def\l{\lambda}
\def\vvv{{\bf v}}
\def\g{{\bf a}}
 \def\f{{\bf f}}
\def\R{\mathbb{R}}
\def\aa{{\bf a}}
\newcommand{\setof}[2]{\left\{ #1 \, : \, #2 \right\}}
\declaretheorem[numberwithin=section]{theorem}
\declaretheorem[numberlike=theorem]{lemma}
\declaretheorem[numberlike=theorem]{corollary}
\declaretheorem[numberlike=theorem, style=definition]{remark}
\declaretheorem[numberlike=theorem, style=definition]{example}
\numberwithin{equation}{section} 
\newtheorem*{theorem1*}{Theorem \ref{thm:vol}}
\newtheorem*{theorem2*}{Theorem \ref{thm:faces}}
\begin{document}

\begin{abstract} Recent progress on flow polytopes indicates many interesting families with product formulas for their volume. These product formulas are all proved using analytic techniques. Our work breaks from this pattern. We define a family of closely related flow polytopes $\F_{(\lambda, {\bf a})}$ for each partition shape $\lambda$ and netflow vector ${\bf a}\in \Z^n_{> 0}$. In each such family, we prove that there is a polytope (the limiting one in a sense) which is a product of scaled simplices, explaining their product volumes. We also show that the combinatorial type  of all polytopes in a fixed family 
 $\F_{(\lambda, {\bf a})}$ is the same. When  $\lambda$ is a staircase shape and ${\bf a}$ is the all ones vector the latter results specializes to a  theorem of the first author with Morales and Rhoades, which shows  that the combinatorial type of the  Tesler polytope is a product of simplices. 
 \end{abstract}

\maketitle

\section{Introduction}\label{sec:intro}

The Catalan numbers, $C_n=\frac{1}{n+1}{n\choose 2}$, $n \in \Z_{\geq 0}$,  are well known for  counting a plethora of combinatorial objects; see \cite[Ex. 6.19]{ec2} for hundreds of interpretations. Naturally then, if an integer polytope has   volume  divisible by a product of consecutive Catalan numbers, one would hope for a combinatorial explanation of such a phenomenon. The latter sentiment ran into obstacles with several flow polytopes, namely the (type A) Chan-Robbins-Yuen polytope \cite{CRY},  its type C and D generalizations \cite{mm}, as well as the Tesler polytope \cite{tesler}. Our work is inspired by the Tesler polytope (which is the flow polytope of the complete graph with netflow vector all ones) $\F_{K_{n+1}}({\bf 1})$, which we explain below can be associated with a staircase partition.  Two of the known main results known about 
$\F_{K_{n+1}}({\bf 1})$ are as follows. We define $\F_{K_{n+1}}({\bf 1})$, and flow polytopes in general, in Section \ref{sec:back}.

\begin{theorem} \label{thm:voltesler} \cite[Theorem 1.9]{tesler}
The normalized volume of the Tesler polytope
$\F_{K_{n+1}}({\bf 1})$  equals
\begin{align}
\vol \F_{K_{n+1}}({\bf 1})
&=  \frac{\binom{n}{2}!\cdot
  2^{\binom{n}{2}}}{\prod_{i=1}^n i!} \notag \\
&= |SYT_{(n-1,n-2,\ldots,1)}| \cdot
\prod_{i=0}^{n-1} C_i, \label{eq:volformula}
\end{align}
where $C_{i}$ is the $i\textsuperscript{th}$ Catalan number
and $|SYT_{(n-1,n-2,\ldots,1)}|$ is the number of Standard Young Tableaux of
staircase shape $(n-1,n-2,\ldots,1)$. 
\end{theorem}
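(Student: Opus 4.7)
The plan is to follow the standard analytic route: apply the Postnikov--Stanley flow-polytope volume formula to write $\vol\F_{K_{n+1}}(\mathbf{1})$ as an evaluation of the type $A_n$ Kostant partition function (KPF), rewrite that KPF as a multivariate constant term via the classical residue representation, and evaluate the constant term with the Morris identity. Concretely, for $K_{n+1}$ oriented so each edge points from the smaller to the larger vertex, Postnikov--Stanley expresses $\vol\F_{K_{n+1}}(\mathbf{a})$ as $K_{K_{n+1}}(\mathbf{a}-\mathbf{d})$ for an explicit degree-shift $\mathbf{d}$; specialising to $\mathbf{a}=\mathbf{1}$ collapses this to a single KPF value.

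Next, I would use the residue representation
\[
K_{A_n}(\mathbf{v}) \;=\; \CT_{x_1,\ldots,x_{n+1}}\;\prod_{1\le i<j\le n+1}\bigl(1-x_j/x_i\bigr)^{-1}\prod_{i=1}^{n+1}x_i^{-v_i}
\]
with the $\mathbf{v}$ produced by the previous step. The resulting constant term has the form $\CT\prod x_i^{e_i}\prod(1-x_j/x_i)^{-1}$, which is exactly the shape evaluated in closed form by Morris's constant term identity (equivalently, a Selberg-type integral). Applying Morris and simplifying the Gamma-function product yields $\binom{n}{2}!\,2^{\binom{n}{2}}/\prod_{i=1}^n i!$, establishing the first equality in \eqref{eq:volformula}.

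The second equality is purely combinatorial. By the hook-length formula applied to the staircase shape $\delta=(n-1,n-2,\ldots,1)$ one has $|SYT_\delta|=\binom{n}{2}!/H_\delta$ where $H_\delta$ is the product of hook lengths; a direct evaluation of $H_\delta$ and comparison with $\prod_{i=0}^{n-1}C_i=\prod_{i=0}^{n-1}\binom{2i}{i}/(i+1)$ gives the identity $|SYT_\delta|\cdot\prod_{i=0}^{n-1}C_i = \binom{n}{2}!\,2^{\binom{n}{2}}/\prod_{i=1}^n i!$, matching the first formula.

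The main obstacle is the second step: one must match the specific shift $\mathbf{v}$ coming from Postnikov--Stanley on $K_{n+1}$ with netflow $\mathbf{1}$ to the precise exponent pattern required by Morris's identity, and carefully track exponents and signs (the flow polytope lives in a quotient by the all-ones direction, so one must be careful about which variable is eliminated and whether an auxiliary shift is needed before Morris applies). Once the constant term is properly cast into Morris's form, the remaining factorial/Catalan identification is routine.
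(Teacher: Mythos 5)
First, a point of comparison: the paper does not prove this statement at all --- it is quoted verbatim from \cite{tesler}, so there is no in-paper proof to measure your attempt against. Your overall pipeline (a Lidskii-type formula, then a constant term, then Morris's identity) is indeed the route taken in that reference, and your handling of the second equality via the hook length formula for the staircase shape is routine and fine.

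There is, however, a genuine gap in your first step. The claimed collapse of $\vol \F_{K_{n+1}}({\bf 1})$ to a \emph{single} Kostant partition function value is false. The single-value formula $\vol\F_G = K_G(\text{shift})$ of Postnikov--Stanley type holds for the netflow $(1,0,\ldots,0,-1)$; for the all-ones netflow one must use the full Lidskii formula \eqref{eq:vol}, which for $K_{n+1}$ reads as a sum over all weak compositions $\mathbf{i}\models \binom{n}{2}$ of $\binom{\binom{n}{2}}{i_1,\ldots,i_n}K_{K_n}(i_1-(n-1),\ldots,i_n-0)$. Setting $a_i=1$ kills the monomial $a_1^{i_1}\cdots a_n^{i_n}$ but not the sum. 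That sum is essential: after writing each Kostant value as a constant term, the multinomial theorem repackages the whole sum into $\CT\,(x_1+\cdots+x_n)^{\binom{n}{2}}\prod_{1\le i<j\le n}(x_i-x_j)^{-1}$, and it is precisely the factor $(x_1+\cdots+x_n)^{\binom{n}{2}}$ that puts the expression into (a variant of) Morris's form and produces the $\binom{n}{2}!$ and $2^{\binom{n}{2}}$ in the answer. A lone Kostant value would give a constant term of the form $\CT\prod_i x_i^{e_i}\prod_{i<j}(1-x_j/x_i)^{-1}$ with no such factor, which is not of Morris type and does not evaluate to the stated product. The correct manipulation is exactly the one this paper carries out for its own family in the proof of Theorem \ref{thm:ct}; you would need to perform the same Lidskii-sum-to-constant-term step here before Morris's identity can be invoked.
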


\begin{theorem} 
\label{increasing-simplices} \cite[Corollaries 2.8 \& 2.9]{tesler}
The face poset of the Tesler polytope $\F_{K_{n+1}}({\bf 1})$ is isomorphic to
the face poset of the Cartesian product  of simplices
$\Delta_1 \times \Delta_2 \times \cdots \times \Delta_{n-1}$. In particular, the $h$-polynomial of the Tesler polytope $\F_{K_{n+1}}({\bf 1})$ is the Mahonian distribution
\begin{equation*}
\sum_{i = 0}^{{n \choose 2}} h_i x^i = [n]!_x = (1+x)(1+x+x^2) \cdots (1+x+x^2 + \cdots + x^{n-1}).
\end{equation*}
\end{theorem}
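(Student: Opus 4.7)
The plan is to exhibit an explicit, order-preserving bijection between the face lattice of $\F_{K_{n+1}}(\mathbf{1})$ and that of $\Delta_1\times\Delta_2\times\cdots\times\Delta_{n-1}$, by parametrizing faces by their ``support pattern.''

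First I would present the Tesler polytope in standard coordinates as the set of upper-triangular matrices $A=(a_{ij})_{1\le i\le j\le n}$ with $a_{ij}\ge 0$ and $\sum_{j\ge k}a_{kj}-\sum_{i<k}a_{ik}=1$ for every $k\in[n]$, and then use the $k$-th balance equation to eliminate the diagonal entry $a_{kk}=1+\sum_{i<k}a_{ik}-\sum_{j>k}a_{kj}$. This realizes $\F_{K_{n+1}}(\mathbf{1})$ as a full-dimensional polytope in the $\binom{n}{2}$-dimensional affine space with free coordinates $\{a_{ij}:i<j\}$, cut out by $a_{ij}\ge 0$ for $i<j$ together with the $n$ inequalities $a_{kk}\ge 0$ (now linear in the free coordinates).

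Next I would associate to each face $F$ the tuple $(S_1,\ldots,S_{n-1})$, where $S_k\subseteq\{k,k+1,\ldots,n\}$ is the set of columns $j$ for which $a_{kj}>0$ throughout the relative interior of $F$ (using the convention that $j=k$ means $a_{kk}>0$). The main claim is that this is an order-preserving bijection onto the face poset of $\Delta_{n-1}\times\Delta_{n-2}\times\cdots\times\Delta_1$, in which the factor $\Delta_{n-k}$ has vertex set $\{k,\ldots,n\}$. Two items must be checked: (a) every tuple of non-empty subsets $(S_1,\ldots,S_{n-1})$ is actually realized by some flow, i.e., there exists a Tesler matrix whose row-$k$ positive support equals $S_k$; and (b) the locus of Tesler matrices realizing a prescribed pattern is the relative interior of a face of $\F_{K_{n+1}}(\mathbf{1})$, which is automatic since the conditions ``$a_{ij}=0$ for $j\notin S_k$ and $a_{kj}>0$ for $j\in S_k$'' cut out a coordinate face of the polytope. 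Face containment on the Tesler side matches term-wise inclusion of the $S_k$, which is precisely the face order of the product of simplices.

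The main technical obstacle is (a), the realizability of every support pattern, because the balance equations couple different rows through the column sums. I would handle this by an explicit top-down construction: having placed rows $1,\ldots,k-1$ with the prescribed supports, the $k$-th balance equation demands distributing the positive total $1+\sum_{i<k}a_{ik}$ among the columns in $S_k$, which is achievable by a generic (say uniform) strictly positive assignment on $S_k$, ensuring $a_{kj}>0$ exactly when $j\in S_k$ and $a_{kj}=0$ otherwise. The diagonal condition $a_{kk}>0\iff k\in S_k$ is automatic from the separate treatment of $k\in S_k$ versus $k\notin S_k$ in the distribution. Once the face-lattice isomorphism is established, the Mahonian identity for the $h$-polynomial follows immediately from $h(\Delta_k;x)=1+x+\cdots+x^k$ together with the multiplicativity of $h$-polynomials under Cartesian products of simple polytopes.
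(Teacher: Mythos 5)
Your proof is correct and follows essentially the same route as the paper: the support pattern $(S_1,\dots,S_{n-1})$ of a face is exactly the ``regular subgraph'' of $K_{n+1}$ in the paper's Lemma~\ref{lem:regular} (each $S_k\neq\emptyset$ $\leftrightarrow$ at least one edge out of each non-sink vertex), and your greedy top-down realizability construction is the paper's Lemma~\ref{lem:tree}. The only differences are cosmetic: you argue directly from the inequality description $\{x\ge 0: Mx=b\}$ instead of invoking Hille's correspondence between faces and regular subgraphs (Theorem~\ref{thm:hille-faces}), and you treat only the staircase/Tesler case rather than deducing it from the general family $\F_{(\lambda,\mathbf{a})}$ as in Theorem~\ref{thm:faces}.
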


For each partition $\lambda$ and vector ${\bf a}$ we construct a family of flow polytopes $\F_{(\lambda, {\bf a})}$, which we define in Section \ref{sec:family}.  
The Tesler polytope $\F_{K_{n+1}}({\bf 1})$ belongs to $\F_{((n-1,n-2,\ldots,1), {\bf 1})}$. We prove the following general theorems about the families  $\F_{(\lambda, {\bf a})}$.
The limiting polytope $ \F_{(\lambda, {\bf a})}^{\lim}$ is defined in Section \ref{sec:vol}.

\begin{theorem1*}  Let $\lambda$ be a partition, $n \geq \lambda_1 + \ell(\lambda)$, and $\aa \in (\Z_{> 0})^n$ a vector of positive integers. The limiting polytope of $\F_{(\lambda, \aa)}$ is integrally equivalent to a product of scaled simplices $a_1\Delta_{\l_1}\times \cdots \times a_{l(\l)}\Delta_{\l_{l(\l)}}$. Consequently, it has normalize volume
\begin{align}
    \vol \F_{(\lambda, {\bf a})}^{\lim} &=  \left( \sum_{i \in [\ell(\lambda)]}  \lambda_i\right) ! \prod_{i \in [\ell(\lambda)]} \frac{ a_i^{\lambda_i}}{\lambda_i !}
\end{align}\end{theorem1*}

\begin{theorem2*}  
Let  $\lambda=(\l_1, \ldots, \l_k)$ be a partition,  $n$ an integer such that $n-i \geq \lambda_i$ for all $i \in [\ell(\lambda)]$, and $\aa \in \Z_{>0}^n$ a netflow vector.
The face posets of the  polytopes belonging to  $\F_{(\lambda, {\bf a})}$ are isomorphic to the face poset of the Cartesian product  of simplices
$\Delta_{\l_1} \times \Delta_{\l_2} \times \cdots \times \Delta_{\l_k}$. In particular, the $h$-polynomial of   the  polytopes belonging to  $\F_{(\lambda, {\bf a})}$ is  

\begin{equation*}
\sum_{i = 0}^{\sum_{i=1}^k \l_i} h_i x^i = \prod_{i=1}^k [\l_i]_x = \prod_{i=1}^k \left(\sum_{j=0}^{\l_i-1} x^j\right).
\end{equation*}
\end{theorem2*}

In particular, we see that Theorem \ref{increasing-simplices} is a special case of Theorem \ref{thm:faces} for $\F_{K_{n+1}}({\bf 1})$ which belongs to $\F_{((n-1,n-2,\ldots,1), {\bf 1})}$. Also notice the similar volumes for 
$\F_{K_{n+1}}({\bf 1})$ (Theorem \ref{thm:voltesler}) and $\F_{((n-1,n-2,\ldots,1), {\bf 1})}^{\lim}$ (Theorem \ref{thm:vol}); they are off by a factor of $\frac{2^{{n \choose 2}}}{n!}$. We spell this curious fact out in the next corollary.

\begin{corollary}\label{cool}
$\vol \F_{K_{n+1}}({\bf 1})=\frac{2^{{n \choose 2}}}{n!} \vol \F_{((n-1,n-2,\ldots,1), {\bf 1})}^{\lim}$
\end{corollary}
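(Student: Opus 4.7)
The plan is to observe that this corollary is an immediate algebraic consequence of the two preceding volume formulas, so the proof reduces to plugging $\lambda=(n-1,n-2,\ldots,1)$ and $\aa=\mathbf{1}$ into Theorem \ref{thm:vol} and dividing into Theorem \ref{thm:voltesler}.

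First, I would compute $\vol \F_{((n-1,n-2,\ldots,1), {\bf 1})}^{\lim}$ directly from Theorem \ref{thm:vol}. Here $\ell(\lambda)=n-1$, $\lambda_i=n-i$, and $a_i=1$, so $\sum_i \lambda_i = \binom{n}{2}$ and
\begin{equation*}
\vol \F_{((n-1,n-2,\ldots,1), {\bf 1})}^{\lim} \;=\; \binom{n}{2}! \prod_{i=1}^{n-1}\frac{1}{(n-i)!} \;=\; \frac{\binom{n}{2}!}{\prod_{j=1}^{n-1} j!}.
\end{equation*}

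Next, I would invoke Theorem \ref{thm:voltesler} to write $\vol \F_{K_{n+1}}({\bf 1}) = \frac{\binom{n}{2}!\cdot 2^{\binom{n}{2}}}{n!\,\prod_{j=1}^{n-1} j!}$, separating the factor $n!$ out of $\prod_{i=1}^n i!$. Taking the ratio of the two expressions causes $\binom{n}{2}!$ and $\prod_{j=1}^{n-1}j!$ to cancel, leaving exactly $\frac{2^{\binom{n}{2}}}{n!}$, which is the claimed factor.

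There is no real obstacle: the corollary is purely a bookkeeping identity between the two displayed product formulas, with the only mild care needed being the reindexing $\prod_{i=1}^n i! = n! \cdot \prod_{j=1}^{n-1} j!$ and the recognition that $\sum_{i=1}^{n-1}(n-i) = \binom{n}{2}$.
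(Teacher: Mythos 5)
Your proof is correct and matches the paper's intent exactly: the paper gives no explicit proof, treating the corollary as an immediate consequence of comparing the product formulas in Theorem \ref{thm:voltesler} and Theorem \ref{thm:vol}, which is precisely the computation you carry out. Your arithmetic (the reindexing $\prod_{i=1}^{n}i!=n!\prod_{j=1}^{n-1}j!$ and $\sum_{i=1}^{n-1}(n-i)=\binom{n}{2}$) checks out.
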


The outline of this paper is as follows. In Section \ref{sec:back} we cover the necessary background and define the class  $\F_{(\lambda, {\bf a})}$. In Section \ref{sec:vol} we define the limiting polytope $\F_{(\lambda, {\bf a})}^{\lim}$ and prove Theorem \ref{thm:vol}. Section \ref{sec:faces} is devoted to proving Theorem \ref{thm:faces}. 
\section{Background and definitions}\label{sec:back}

\subsection{Flow polytopes and Kostant partition functions.} The exposition of this section follows that of \cite{mm}; see \cite{mm} for more details. 

Let $G$ be a (loopless) graph on the vertex set $[n+1]$ with $N$ edges. To each edge $(i, j)$, $i< j$,  of $G$,  associate the positive
type $A_{n}$ root $\vv(i,j)=\ee_i-\ee_j$,
where $\ee_i$ is the $i$th standard basis vector in
$\mathbb{R}^{n+1}$.   Let $S_G := \{\{\vvv_1, \ldots, \vvv_N\}\}$ be the
multiset of roots corresponding to the multiset of edges of $G$. Let  $M_G$ be the
$(n+1)\times N$ matrix whose columns are the vectors in $S_G$.  Fix an
integer  vector $\g=(a_1, \ldots, a_{n+1}) \in \Z^{n+1}$ which we
call the {\bf netflow} and for which we require that $a_{n+1}=-\sum_{i=1}^n a_i$.  An {\bf $\g$-flow} $\f_G$ on $G$ is a 
vector $\f_G=(b_k)_{k \in [N]}$, $b_k \in \R_{\geq 0}$  such that $M_G \f_G=\g$. That is, for all $1\leq i \leq n+1$, we have 

 \begin{equation} \label{eqn:flowA}
\sum_{e=(g<i) \in E(G)} b(e) + a_i = \sum_{e=(i<j) \in E(G)}
b(e)  \end{equation}

Define the {\bf flow polytope} $\F_G(\g)$ associated to a  graph $G$ on the vertex set $[n+1]$ and the integer vector $\aa=(a_1, \ldots, a_{n+1})$ as the set of all $\g$-flows $\f_G$ on $G$, i.e., $\F_G=\{\f_G \in \R^N_{\geq 0} \mid M_G \f_G = \g\}$. The flow polytope 
  $\F_G(\g)$ then naturally lives in $\mathbb{R}^{N}$, where $N$ is the number  of edges of $G$. Note that in order for  $\F_G(\g)$  to be nonempty, it must be that $\sum_{i=1}^{n+1}a_i=0$. For this reason, we also write  $\F_G(a_1, \ldots, a_n):=\F_G(a_1, \ldots, a_n, -\sum_{i=1}^n a_i)$.  The vertices of the flow polytope 
$\F_G(\g)$ are the $\g$-flows whose supports are acyclic subgraphs of $G$ \cite[Lemma 2.1]{hille}.

Recall that the {\bf Kostant partition function}  $K_G$ evaluated at the vector ${\bf b} \in \Z^{n+1}$ is defined as

\begin{equation} \label{kost} K_G({\bf b})= \#  \Big\{ (c_{k})_{k \in [N]}
  \Bigm\vert  \sum_{k \in [N]} c_{k}  \vvv_k ={\bf b} \textrm{ and } c_{k} \in \Z_{\geq 0} \Big\},\end{equation}

\noindent where $[N]=\{1,2,\ldots,N\}$. 

The generating series of the Kostant partition function is
\begin{equation}\label{Kgs}
\sum_{{\bf b} \in \mathbb{Z}^{n+1}} K_G({\bf b}){\bf x}^{\bf b} = {\prod_{(i,j) \in E(G)}} (1-x_ix_j^{-1})^{-1} ,
\end{equation}
where ${\bf x}^{\bf b}=x_1^{b_1}x_2^{b_2}\cdots x_{n+1}^{b_{n+1}}$. In particular, 
\begin{equation} \label{gs:kostant}
K_{K_{n+1}}({\bf b})= [{\bf x}^{\bf b}]
\prod_{1\leq i<j \leq n+1} (1-x_ix_j^{-1})^{-1}.
\end{equation}

Assume that ${\bf a}=(a_1,a_2,\ldots,a_n)$ satisfies $a_i \geq  0$ for
$i=1,\ldots,n$. Let $\aa'=(a_1,a_2,\ldots,a_n, -\sum_{i=1}^n a_i)$. The {\bf generalized Lidskii formulas} of Baldoni and Vergne  state that for a graph $G$ on the vertex set $[n+1]$ with $N$ edges we have
\begin{theorem}\cite[Theorem 38]{BV2}
\begin{equation} \label{eq:vol}
\vol \F_G({\bf a'}) = \sum_{{\bf i}}
\binom{N-n}{i_1,i_2,\ldots,i_n} a_1^{i_1}\cdots
a_n^{i_n}\cdot K_{G'}(i_1-t_1^G, i_2-t_2^G,\ldots, i_n - t_n^G),
\end{equation}
and 
\begin{equation} \label{eq:kost}
K_{G}({\bf a'}) = \sum_{{{\bf i}}}
\binom{a_1+t_1^G}{i_1}\binom{a_2+t_2^G}{i_2} \cdots
\binom{a_{n}+t_n^G}{i_{n}} \cdot  K_{G'}(i_1-t_1^G, i_2-t_2^G,\ldots, i_n - t_n^G),
\end{equation}
where both sums are over weak compositions ${\bf
  i}=(i_1,i_2,\ldots,i_n)$ of $N-n$ with $n$ parts which we
denote as ${\bf i} \models N-n$,  $\ell({\bf i})=n$. The graph $G'$ is the restriction of $G$ to the vertex set $[n]$. The notation  $t_i^G$, $i \in [n]$, stands for the outdegree of vertex $i$ in $G$ minus $1$. 
\end{theorem}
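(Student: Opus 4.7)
The plan is to establish the two identities in sequence: first prove the Kostant partition function formula \eqref{eq:kost} via generating-function coefficient extraction, and then derive the volume formula \eqref{eq:vol} by extracting the leading asymptotic term of \eqref{eq:kost} in the Brion--Vergne / Khovanskii--Pukhlikov manner (for a lattice polytope, the leading coefficient in $k$ of its Ehrhart polynomial equals its Euclidean volume).

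For \eqref{eq:kost}, I would start from \eqref{Kgs}, use $a_{n+1}=-\sum_{i\in[n]} a_i$ to set $x_{n+1}=1$, and factor the product according to whether an edge is incident to the sink vertex $n+1$:
\[
\prod_{(i,j)\in E(G)} (1-x_i x_j^{-1})^{-1} \;=\; \prod_{(i,n+1)\in E(G)} (1-x_i)^{-1} \cdot \prod_{(i,j)\in E(G'),\, j\leq n} (1-x_i x_j^{-1})^{-1}.
\]
The second factor is precisely the generating series for $K_{G'}$. Extracting $[x_1^{a_1}\cdots x_n^{a_n}]$ becomes a convolution of the coefficients of the two factors; I would parametrize this convolution by an $n$-part composition $\mathbf{i}\models N-n$ whose $k$-th part records the total flow leaving vertex $k$ across its $t_k^G+1$ outgoing edges in $G$. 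The stars-and-bars identity $[x^a](1-x)^{-m}=\binom{a+m-1}{a}$, combined with regrouping, produces the binomials $\binom{a_k+t_k^G}{i_k}$, while the residual arguments fed into $K_{G'}$ work out to $i_k-t_k^G$: subtracting a ``one-per-outgoing-edge'' shift is exactly what converts netflow computations in $G$ into netflow computations in $G'$. The constraint $\sum i_k = N-n$ is forced by $\sum_k (t_k^G+1)=N$, and the zero-sum condition $\sum (i_k-t_k^G)=0$ required by $K_{G'}$ then holds automatically.

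To pass from \eqref{eq:kost} to \eqref{eq:vol}, scale $\mathbf{a}'\mapsto k\mathbf{a}'$ in \eqref{eq:kost} and extract the leading order in $k$. Since $\binom{ka_j+t_j^G}{i_j}\sim (ka_j)^{i_j}/i_j!$,
\[
K_G(k\mathbf{a}') \;=\; k^{N-n}\sum_{\mathbf{i}} \frac{a_1^{i_1}\cdots a_n^{i_n}}{i_1!\cdots i_n!} K_{G'}(\mathbf{i}-\mathbf{t}^G) \;+\; O(k^{N-n-1}).
\]
Multiplying the leading coefficient by $(N-n)!$ converts Euclidean volume to normalized volume, and since $\sum i_k=N-n$ the factor $(N-n)!/\prod i_k!$ is the multinomial $\binom{N-n}{i_1,\ldots,i_n}$, yielding \eqref{eq:vol}. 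The main obstacle is the bookkeeping in the first step: correctly identifying $i_k-t_k^G$ as the argument of $K_{G'}$ and $\binom{a_k+t_k^G}{i_k}$ as the binomial weight requires carefully tracking, at each vertex, the flow spent on edges to the sink versus the flow remaining on $G'$, and recognizing that the index shift by $t_k^G$ precisely accounts for the different outdegree structures of $G$ and $G'$.
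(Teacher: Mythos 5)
A preliminary remark: the paper does not prove this statement. It is quoted verbatim from Baldoni and Vergne \cite[Theorem 38]{BV2} and used as a black box (only \eqref{eq:vol} is actually invoked, in the proof of Theorem \ref{thm:ct}), so your attempt cannot be compared to an in-paper argument and must stand on its own. The second half of your plan is fine: deriving \eqref{eq:vol} from \eqref{eq:kost} by dilating $\mathbf{a}'\mapsto k\mathbf{a}'$, using $\binom{ka_j+t_j^G}{i_j}\sim (ka_j)^{i_j}/i_j!$ and the fact that the leading coefficient of the Ehrhart (quasi-)polynomial of the $(N-n)$-dimensional polytope $\F_G(\mathbf{a}')$ is its volume, is a standard and correct reduction.

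The genuine gap is in your proof of \eqref{eq:kost}. Setting $x_{n+1}=1$ in \eqref{Kgs} and splitting off the sink edges does give, via stars and bars, a correct convolution identity, namely
\[ K_G(\mathbf{a}')=\sum_{\mathbf{c}\in\Z_{\geq 0}^n}\ \prod_{k\in[n]}\binom{c_k+s_k-1}{c_k}\, K_{G'}(a_1-c_1,\ldots,a_n-c_n), \]
where $s_k$ is the number of edges from $k$ to the sink and $c_k$ is the flow sent from $k$ directly to the sink. But this is not \eqref{eq:kost}: since $K_{G'}$ vanishes off the hyperplane of zero coordinate sum, the sum here runs over compositions $\mathbf{c}$ of $\sum_k a_k$, not of $N-n$, and the arguments of $K_{G'}$ depend on $\mathbf{a}$. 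The whole point of \eqref{eq:kost} is that the arguments $i_k-t_k^G$ are \emph{independent} of $\mathbf{a}$, exhibiting $K_G(\mathbf{a}')$ as an explicit polynomial in $\mathbf{a}$ on the chamber $a_i\geq 0$. The ``regrouping'' you invoke to pass from one form to the other is not a formal manipulation --- it is essentially the entire content of the theorem. (Your proposed interpretation of $i_k$ as ``the total flow leaving vertex $k$'' also cannot be correct: total outgoing flow is unbounded in $\mathbf{a}$ and does not sum to $N-n$.) The known proofs require additional input: iterated Jeffrey--Kirwan residue computations in Baldoni--Vergne, or an explicit subdivision of $\F_G(\mathbf{a}')$ into cells indexed by compositions $\mathbf{i}\models N-n$, with $K_{G'}(\mathbf{i}-\mathbf{t}^G)$ counting the cells of each type, as in the work of Postnikov--Stanley and M\'esz\'aros--Morales.
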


The notation $\vol$ stands for normalized volume. 
  Recall  that the \textbf{Ehrhart polynomial} $i(\mathcal{P}, t)$ of an integer polytope $\mathcal{P} \subset \R^m$ counts the  number of integer points of dilations of  the polytope, $i(\mathcal{P}, t):=\#(t \mathcal{P}\cap \Z^m)$. Its leading coefficient is the {\bf volume} of the polytope.   The  {\bf normalized volume} $\vol(P)$ of a 
$d$-dimensional polytope $\mathcal{P} \subset \mathbb{R}^m$ is the volume form which 
 assigns a volume of one to the smallest $d$-dimensional integer simplex in the affine span of $\mathcal{P}$. In other words, the normalized volume of a
$d$-dimensional polytope $\mathcal{P}$ is $d!$ times its  volume.

\subsection{The family $\F_{(\lambda, {\bf a})}$} \label{sec:family}
We start by  defining  a family of graphs associated to the partition $\lambda$.
Given a partition $\lambda$, let $Y$ be the left-justified Young diagram corresponding to $\lambda$. Pick an integer $n$ such that $n-i \geq \lambda_i$ for all $i \in [\ell(\lambda)]$. We can place $Y$ inside the upper triangle (not including the diagonal) of an $n \times n$ matrix $M$, with the top and right edges of $Y$ flush with the top and right edges of $M$. Now, let $Y'$ be the set of entries $(i,j)$ of $M$ that lie inside $Y$, and define $G(\lambda, n)$ to be the directed graph 
\[ G(\lambda, n):=\Big( [n+1], \{ (i,n+1) \, : \, i \in [n]\} \cup Y' \Big). \]
\begin{example}
    Construction of $G( (2, 1, 1), 5))$. 
    \begin{figure}[h]
	\centering
	\begin{minipage}{0.25in}
	    \reflectbox{
	    \begin{Young}
		& \cr
		\cr
		\cr
	    \end{Young}
	}
	\end{minipage}
	\hspace{0.5in}
       \begin{minipage}{1.75in}
	   \newcommand{\colorme}{\cellcolor{gray!30}}
	   $\begin{array}{|c|c|c|c|c|r}
	       \multicolumn{1}{c}{1} & \multicolumn{1}{c}{2} & \multicolumn{1}{c}{3} & \multicolumn{1}{c}{4} & \multicolumn{1}{c}{5} \\
	       \cline{1-5}
	       + &  &  & \colorme * & \colorme * & 1 \\ \cline{1-5}
	       & + &  &  & \colorme 	* & 2 \\ \cline{1-5}
	       &  & + &  & \colorme 	* & 3 \\ \cline{1-5}
	       &  &  & + &  		& 4 \\ \cline{1-5}
	       &  &  &  & +  		& 5 \\ \cline{1-5}
	   \end{array}$
       \end{minipage}
    \begin{minipage}{2in}
	\begin{tikzpicture}
	    \foreach \x in {1,...,6} {
		\node (\x) at (\x, 0) [draw,fill,circle,scale=0.3] {};
		\node (\x label) at (\x, -0.3) {\x} ;
	    }
	    \draw (1) to [bend left=60] (4) ;
	    \draw (1) to [bend left=60] (5) ;
	    \draw (2) to [bend left=60] (5) ;
	    \draw (3) to [bend left=60] (5) ;
	    \foreach \x in {1,...,6} {
		\draw (\x) to [bend left=60] (6) ;
	    }
	\end{tikzpicture}
    \end{minipage}
    \caption{From left to right: the left-justified Young diagram of $\lambda = (2,1,1)$, the diagram in a $5 \times 5$ matrix, and the corresponding graph on six vertices.}
    \end{figure}
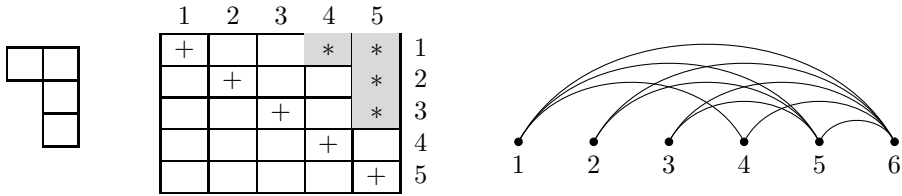
\end{example}

For a vector $\aa \in \Z_{>0}^m$ with $m \geq \lambda_1 + \ell(\lambda)$, define the family 
\[ \F_{(\lambda, \aa)} := \setof{ \F_{G(\lambda, n)}(\aa) }{ \max(\lambda_1, \ell(\lambda)) < n \in \Z}. \] 
Note that there is a small abuse of notation in the definition above: if $n \neq m$, then $\aa$ will have too many or too few entries to serve as a netflow for many $G(\lambda, n)$. When $n \leq m$, then we can just use the first $n$ entries of $\aa$. For $n > m$, we show in Section \ref{sec:vol} that the choice of additional entries is irrelevant: any element of $\Z_{> 0}^n$  whose first $m$ entries match those of $\aa$ will product essentially the same polytope. More precisely, we prove that all the above mentioned polytopes are integrally equivalent. Recall  that integer polytopes $\mathcal{P}\subset \R^m$ and $\mathcal{Q}\subset \R^k$ are \textbf{integrally equivalent} if there is an affine transformation $f:\R^m\rightarrow \R^k$ such that $f$ maps $\mathcal{P}$ bijectively onto $\mathcal{Q}$ and $f$ maps $\Z^m\cap  \operatorname{aff}(\mathcal{P})$ bijectively onto $\Z^k\cap \operatorname{aff}(\mathcal{Q})$, where  $ \operatorname{aff}$ denotes affine span. If two polytopes are  integrally equivalent, then they have the same combinatorial type as well as the same volume and more generally the same Ehrhart polynomial.

Observe that for any $n \in \Z_{>0}$ and $\lambda = (n-1, n-2, \ldots, 1)$, $G(\lambda, n) = K_{n+1}$. Setting $\aa = {\bf 1}$, it follows that  the Tesler polytope $\F_{K_{n+1}}({\bf 1})$ belongs to $\F_{((n-1, n-2, \ldots, 1), {\bf 1})}$.
\section{The limiting polytopes of $\F_{(\lambda, {\bf a})}$}\label{sec:vol}

In this section we define the limiting polytope of the family $\F_{(\lambda, {\bf a})}$ for any partition $\lambda$ and netflow vector $\a$. We then establish the combinatorial structure and the volume of these limiting polytopes. 

One can easily see the need to define a limiting polytope of $\F_{(\lambda, {\bf a})}$ from the following data on the normalized volumes of the members of the family $\F_{((4,3,2,1), {\bf 1})}$:
\begin{center}
    \begin{tabular}{r|lllllll}
	$n $ & 5 & 6 & 7 & 8 & 9 & 10 & 11  \\
	$\vol \F_{G( (4,3,2,1),n)}({\bf 1}) $ & 107520 & 26580 & 15120 & 12600 & 12600 & 12600 & 12600 
    \end{tabular}
\end{center}
One immediately notices that the volume of the polytopes in question appears to stabilize for large $n$. This is not a coincidence, and is in fact a general feature of polytopes in $\F_{(\lambda, \aa)}$, as we show in this section.

For a partition $\lambda$ and $\aa \in \Z_{> 0}^n$, define the \textbf{limiting polytope of the family} $\F_{(\lambda, \aa)}$, denoted $\F_{(\lambda, \aa)}^{\lim}$, to be the polytope $\F_{G(\lambda, \ell(\lambda) + \lambda_1)}(\aa)$. We prove  in Lemma \ref{lem:product} 
that for all $n \geq \ell(\lambda) + \lambda_1$ we have that $\F_{G(\lambda,n)}(\aa)$ and $\F_{G(\lambda, \ell(\lambda) + \lambda_1)}(\aa)$ are integrally equivalent; thus any one of $\F_{G(\lambda,n)}(\aa)$ with $n \geq \ell(\lambda) + \lambda_1$ can be thought of as  $\F_{(\lambda, \aa)}^{\lim}$. 

\subsection{Structure and Volume of the Limiting Polytope}

Given a graph $G = G(\lambda, n)$, 
for each vertex $i \in [n]$, let
    \[ G_i = ( [n+1], \setof{(i,j) \in E(G)}{i < j} \cup \setof{ (j,n+1)}{j  \in [n]}) \]
    be the subgraph of $G$ graph obtained by restricting $E(G)$ to those edges that come out of vertex $i$ or go to the sink.

\begin{lemma}
    Let $\lambda$ be a partition, let $n \geq \ell(\lambda) + \lambda_1$, let $G$ and $G_i$ be as above for $i \in [n]$, and let $\aa \in \Z_{> 0}^n$. 
    Then $\F_{G(\lambda, n)}(\aa)$ is integrally equivalent to $\prod_{i = 1}^n \F_{G_i}(\aa)$:
    $$\F_{G(\lambda, n)}(\aa) \equiv  \prod_{i = 1}^n \F_{G_i}(\aa).$$
    \label{lem:product}
\end{lemma}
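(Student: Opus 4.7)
The plan is to construct an explicit affine bijection $\phi : \F_{G(\lambda,n)}(\aa) \to \prod_{i=1}^n \F_{G_i}(\aa)$ whose inverse is also integer-affine. The key structural fact I would establish first is this: under the hypothesis $n \geq \ell(\lambda) + \lambda_1$, no vertex of $G(\lambda,n)$ is simultaneously the source of a $Y'$-edge and the target of a $Y'$-edge. Indeed, since $Y'$ is placed flush against the top and right of the $n \times n$ matrix, row $i$ of $Y'$ occupies columns $n-\lambda_i+1, \ldots, n$; hence a vertex $v$ has an outgoing $Y'$-edge only when $v \leq \ell(\lambda)$ and an incoming $Y'$-edge only when $v \geq n - \lambda_1 + 1$, and the hypothesis forces these ranges to be disjoint.

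I would then define $\phi$ by sending a flow $(b_e)_{e \in E(G)}$ to the tuple $(f^{(i)})_{i=1}^n$ given by
\begin{align*}
f^{(i)}_{(i,j)} &= b_{(i,j)} \quad \text{for } (i,j) \in Y', \\
f^{(i)}_{(i,n+1)} &= a_i - \sum_{(i,j) \in Y'} b_{(i,j)}, \\
f^{(i)}_{(k,n+1)} &= a_k + [(i,k) \in Y']\, b_{(i,k)} \quad \text{for } k \in [n] \setminus \{i\}.
\end{align*}
Flow conservation inside each $G_i$ holds by construction, so only nonnegativity needs to be checked. Nonnegativity of $f^{(i)}_{(k,n+1)}$ for $k \neq i$ is immediate; for $f^{(i)}_{(i,n+1)}$, the structural fact above delivers the following dichotomy. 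Either $i$ has no outgoing $Y'$-edge, in which case the sum vanishes and $f^{(i)}_{(i,n+1)} = a_i > 0$; or $i$ has no incoming $Y'$-edge, in which case conservation at $i$ in $G$ yields $f^{(i)}_{(i,n+1)} = b_{(i,n+1)} \geq 0$.

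The inverse $\psi$ is then forced by conservation in $G$: set $b_{(i,j)} = f^{(i)}_{(i,j)}$ on $Y'$-edges and $b_{(i,n+1)} = a_i + \sum_{(k,i) \in Y'} f^{(k)}_{(k,i)} - \sum_{(i,j) \in Y'} f^{(i)}_{(i,j)}$. The same dichotomy confirms $b_{(i,n+1)} \geq 0$, and a direct calculation shows $\psi \circ \phi = \id$ and $\phi \circ \psi = \id$. Since every coefficient appearing in $\phi$ and $\psi$ is an integer (with constants drawn only from the integer entries of $\aa$), these maps descend to bijections between the integer points of the ambient affine spans, which is exactly what integral equivalence requires. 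The main obstacle is really just the nonnegativity verification in both directions: it is easy once the structural fact from the first paragraph is in hand, but that fact is precisely where the assumption $n \geq \ell(\lambda) + \lambda_1$ enters — without it a vertex could be both a source and a target of $Y'$-edges, and the clean product decomposition would collapse.
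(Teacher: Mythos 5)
Your proof is correct and follows essentially the same route as the paper: the same edge-by-edge product map $\phi$ (with the sink-edge values forced by flow conservation in each $G_i$), the same explicit inverse, and the same observation that integrality of all coefficients yields integral equivalence rather than mere combinatorial equivalence. If anything, your version is more careful than the paper's — you isolate the disjointness of $Y'$-sources and $Y'$-targets as the place where $n \geq \ell(\lambda)+\lambda_1$ is used, and your inverse formula $b_{(i,n+1)} = a_i + \sum_{(k,i)} f^{(k)}_{(k,i)} - \sum_{(i,j)} f^{(i)}_{(i,j)}$ correctly accounts for incoming $Y'$-edges on sink edges, which the paper's terser statement $f(p,q) = f_p(p,q)$ glosses over.
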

\begin{proof}
    \newcommand{\ph}{\varphi}
    Define the map  $\ph: \F_G(\aa) \to \prod_{i = 1}^n \F_{G_i}(\aa)$ by 
    \[ \ph(f) = (f_1, \ldots, f_n) \]
    where $f_i : E(G_i) \to \R$ is defined by
    \[ f_i(p,q) = \begin{cases}
    	f(i,j), & (p,q) = (i,j) \\
    	a_p + f(i,p), &  q = n+1 \textrm{ and } (i,p) \in E(G_i) \\
    	a_p, & q = n+1 \textrm{ and } (i,p) \not\in E(G_i)
    \end{cases} \]

    The inverse of map $\ph$ is $\ph^{-1}:\prod_{i = 1}^n \F_{G_i} \to \F_G$ defined by $\ph^{-1}(f_1, \ldots, f_n) = f$ where 
 $ f(p,q) = f_p(p,q),$
     thus $\ph$ is a bijection between $\F_{G(\lambda, n)}(\aa)$ and $\prod_{i = 1}^n \F_{G_i}(\aa)$. Moreover, $\ph$ can be extended to an affine map mapping the integer points of the affine span of $\F_{G(\lambda, n)}(\aa)$ bijectively to the integer points of the affine span of $\prod_{i = 1}^n \F_{G_i}(\aa)$, concluding the proof.\end{proof}

 We now show that the polytopes $\F_{G_i}(\aa)$ appearing in Lemma \ref{lem:product} are very special:
    \begin{lemma}
	For $i \in [\ell(\lambda)]$, $\aa \in \Z_{> 0}^n$, $\F_{G_i}(\a)$ is integrally equivalent  to $a_i \Delta_{\lambda_i}$, a scaled simplex of dimension $\lambda_i$. For $\ell(\lambda) < i < n+1$, $\F_{G_i}(\a)$ is a point.
	\label{lem:simplices}
    \end{lemma}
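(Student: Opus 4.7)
The plan is to directly analyze the flow equations on $G_i$ and show that most coordinates are forced, leaving only the flows on edges leaving vertex $i$ as free parameters.

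First I would describe the edges of $G_i$ explicitly. By construction, $G_i$ consists of (a) the sink edges $(j, n+1)$ for every $j \in [n]$, and (b) the non-sink edges out of vertex $i$ in $G(\lambda, n)$. Because of how $Y$ is positioned in the upper triangle of $M$, for $i \in [\ell(\lambda)]$ the latter are exactly the $\lambda_i$ edges $(i, n-\lambda_i+1), (i, n-\lambda_i+2), \ldots, (i, n)$, while for $i > \ell(\lambda)$ there are no such edges.

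Next I would read off the flow equations (\ref{eqn:flowA}) vertex by vertex. For any $j \in [n] \setminus \{i\}$, vertex $j$'s only outgoing edge in $G_i$ is $(j, n+1)$, so conservation forces
\[ f(j, n+1) = a_j + \chi_{(i,j) \in E(G_i)} \cdot f(i,j), \]
and in particular this quantity is automatically nonnegative since $a_j > 0$ and $f(i,j) \geq 0$. Thus the sink edges $(j, n+1)$ for $j \neq i$ contribute no independent parameters and no active inequality constraints. The only remaining conservation law is at vertex $i$ itself, which reads $\sum_{e \text{ out of } i \text{ in } G_i} f(e) = a_i$.

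For $i \in [\ell(\lambda)]$, vertex $i$ has exactly $\lambda_i + 1$ outgoing edges in $G_i$ (namely the $\lambda_i$ edges coming from row $i$ of $Y$, together with $(i, n+1)$), so the flow polytope projects onto the set
\[ \bigl\{ (x_0, x_1, \ldots, x_{\lambda_i}) \in \R_{\geq 0}^{\lambda_i + 1} : x_0 + x_1 + \cdots + x_{\lambda_i} = a_i \bigr\}, \]
which is precisely $a_i \Delta_{\lambda_i}$. For $i > \ell(\lambda)$, vertex $i$ has only the single outgoing edge $(i, n+1)$, forcing $f(i, n+1) = a_i$ and leaving every coordinate of $\F_{G_i}(\aa)$ determined; hence $\F_{G_i}(\aa)$ is a point. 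In both cases, the projection onto the free coordinates is an affine map with integer inverse (each omitted coordinate is recovered by an integer affine combination of the kept ones together with fixed entries of $\aa$), which gives the integral equivalence, not just a combinatorial or volume-preserving bijection.

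The main thing to be careful about is simply the bookkeeping of which edges belong to $G_i$ and verifying that the determined sink-edge flows are always nonnegative so that they impose no extra constraints; no deeper obstacle appears, since once the flow equations are written down the simplex structure is immediate.
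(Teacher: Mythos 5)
Your proof is correct and takes essentially the same approach as the paper: both project a flow onto the $\lambda_i+1$ coordinates of the edges leaving vertex $i$, note that the conservation law at $i$ (which has no incoming edges) forces these to sum to $a_i$, and observe that every other coordinate is an integrally affine, automatically nonnegative function of these. You merely spell out the bijectivity and nonnegativity checks that the paper dismisses as ``not hard to see.''
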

    \begin{proof}
	\newcommand{\ph}{\varphi}
	\renewcommand{\v}{\mathbf{v}}
	Let $i \in [\ell(\lambda)]$.
	Define $\ph_i : \F_{G_i}(\aa) \to a_i \Delta_{\lambda_i}$ by 
	$\ph_i(f_i) = \v \in  \R^{\lambda_i+1}$, where $v_j = f_i(i, n +2- j)$. To see that this function is well-defined, note that $i \in V(G_i)$ has no incoming edges (see Figure \ref{fig:bign}), so $\sum_{(i,j) \in E(G_i)} f(i,j)  = \sum_{j \in [\lambda_i+1]} v_j = a_i$.
	This map is a projection;  it is affine and preserves integer points. It is not hard to see that $\ph_i $ is a bijection between  $\F_{G_i}(\aa)$ and $a_i \Delta_{\lambda_i}$.
	Furthermore, the second claim of Lemma \ref{lem:simplices} is immediate. \end{proof}

   Lemmas \ref{lem:product} and \ref{lem:simplices} imply that we can consider any one of  $\F_{G(\lambda,n)}(\aa)$ with $n \geq \ell(\lambda) + \lambda_1$  as the limiting polytope $\F_{(\lambda, \aa)}^{\lim}$. Indeed, when $n \geq \lambda_1 + \ell(\lambda)$, it is guaranteed that the Young diagram of $\lambda$ will fit in the top right quadrant of an $n \times n$ matrix. Figure  \ref{fig:bign}  illustrates the effects of this. 
   
     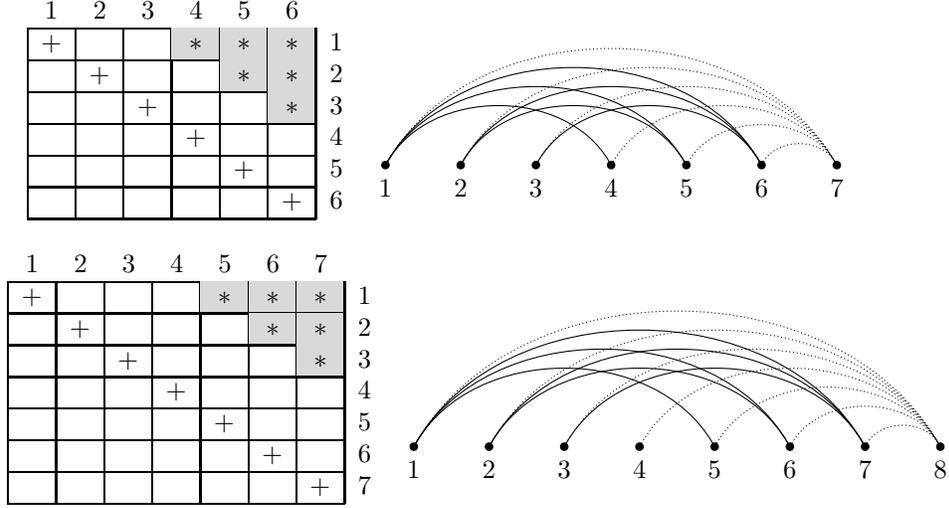
\begin{figure}[h] 
       \begin{minipage}{1.75in}
	   \newcommand{\colorme}{\cellcolor{gray!30}}
	   $\begin{array}{|c|c|c|c|c|c|r}
	       \multicolumn{1}{c}{1} & \multicolumn{1}{c}{2} & \multicolumn{1}{c}{3} & \multicolumn{1}{c}{4} & \multicolumn{1}{c}{5} & \multicolumn{1}{c}{6}\\
	       \cline{1-6}
	       +& &  & \colorme * & \colorme * & \colorme * & 1 \\ \cline{1-6}
	       &+ &  &  & \colorme * & \colorme * & 2 \\ \cline{1-6}
	       && + &  &  & \colorme 	* & 3 \\ \cline{1-6}
	       &&  & + &  &  & 4 \\ \cline{1-6}
	       &&  &  & + &  		& 5 \\ \cline{1-6}
	       &&  &  &  & +  		& 6 \\ \cline{1-6}
	   \end{array}$
       \end{minipage}
    \begin{minipage}{2in}
	\begin{tikzpicture}
	    \foreach \x in {1,...,7} {
		\node (\x) at (\x, 0) [draw,fill,circle,scale=0.3] {};
		\node (\x label) at (\x, -0.3) {\x} ;
	    }
	    \draw (1) to [bend left=60] (4) ;
	    \draw (1) to [bend left=60] (5) ;
	    \draw (1) to [bend left=60] (6) ;
	    \draw (2) to [bend left=60] (5) ;
	    \draw (2) to [bend left=60] (6) ;
	    \draw (3) to [bend left=60] (6) ;
	    \foreach \x in {1,...,7} {
		\draw[densely dotted] (\x) to [bend left=60] (7) ;
	    }
	\end{tikzpicture}
    \end{minipage} \vspace{0.15in}\\\noindent
       \begin{minipage}{2.0in}
	   \newcommand{\colorme}{\cellcolor{gray!30}}
	   $\begin{array}{|c|c|c|c|c|c|c|r}
	       \multicolumn{1}{c}{1} & \multicolumn{1}{c}{2} & \multicolumn{1}{c}{3} & \multicolumn{1}{c}{4} & \multicolumn{1}{c}{5} & \multicolumn{1}{c}{6}& \multicolumn{1}{c}{7}\\
	       \cline{1-7}
	       +&&&& \colorme *& \colorme * & \colorme * & 1 \\ \cline{1-7}
	       &+&&&& \colorme *& \colorme 	* & 2 \\ \cline{1-7}
	       &&+&&&& \colorme * & 3 \\ \cline{1-7}
	       &&&+&&&  		& 4 \\ \cline{1-7}
	       &&&&+&&  		& 5 \\ \cline{1-7}
	       &&&&&+&  		& 6 \\ \cline{1-7}
	       &&&&&&+  		& 7 \\ \cline{1-7}
	   \end{array}$
       \end{minipage}
    \begin{minipage}{2in}
	\begin{tikzpicture}
	    \foreach \x in {1,...,8} {
		\node (\x) at (\x, 0) [draw,fill,circle,scale=0.3] {};
		\node (\x label) at (\x, -0.3) {\x} ;
	    }
	    \draw (1) to [bend left=60] (5) ;
	    \draw (1) to [bend left=60] (6) ;
	    \draw (1) to [bend left=60] (7) ;
	    \draw (2) to [bend left=60] (6) ;
	    \draw (2) to [bend left=60] (7) ;
	    \draw (3) to [bend left=60] (7) ;
	    \foreach \x in {1,...,8} {
		\draw[densely dotted] (\x) to [bend left=60] (8) ;
	    }
	\end{tikzpicture}
    \end{minipage}
    \caption{The Young diagram of $\lambda = (3,2,1)$ in both $6 \times 6$ and $7 \times 7$ matrices, and the corresponding graphs $G(\lambda, 6)$ and $G(\lambda, 7)$, with edges to the sink dotted. Observe that increasing $n$ by 1 adds a single new vertex with a single outgoing edge to the sink. This underlies the fact that $\F_{G(\lambda, 6)}(\aa)$ and $\F_{G(\lambda, 7)}(\aa)$ are integrally equivalent and have the same volume. It also justifies our use of $\aa$ as the netflow vector for both $\F_{G(\lambda, 6)}$ and $\F_{G(\lambda, 7)}$: only the first $\ell(\lambda)$ entries of the netflow vector matter.}
    \label{fig:bign}
    \end{figure}

    The decomposition of the limiting polytope into simplices also gives us a neat formula for its volume.
    
\begin{theorem} \label{thm:vol} 
    Let $\lambda$ be a partition, $n \geq \lambda_1 + \ell(\lambda)$, and $\aa \in (\Z_{> 0})^n$ a vector of positive integers. The limiting polytope of $\F_{(\lambda, \aa)}$ is integrally equivalent to a product of scaled simplices $a_1\Delta_{\l_1}\times \cdots \times a_{l(\l)}\Delta_{\l_{l(\l)}}$. Consequently, it has normalize volume
\begin{align}
    \vol \F_{(\lambda, {\bf a})}^{\lim} &=  \left( \sum_{i \in [\ell(\lambda)]}  \lambda_i\right) ! \prod_{i \in [\ell(\lambda)]} \frac{ a_i^{\lambda_i}}{\lambda_i !}
\end{align}
\end{theorem}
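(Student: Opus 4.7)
The plan is to assemble Lemmas \ref{lem:product} and \ref{lem:simplices} and then invoke the standard formula for the normalized volume of a Cartesian product of scaled simplices. First, since $n \geq \lambda_1 + \ell(\lambda)$, we may take $\F_{G(\lambda,n)}(\aa)$ as our representative of $\F_{(\lambda,\aa)}^{\lim}$. Applying Lemma \ref{lem:product} yields an integral equivalence
$$\F_{(\lambda,\aa)}^{\lim} \;\equiv\; \prod_{i=1}^{n} \F_{G_i}(\aa).$$
By Lemma \ref{lem:simplices}, the factors with $i \in [\ell(\lambda)]$ are integrally equivalent to $a_i\Delta_{\lambda_i}$, while the factors with $\ell(\lambda) < i \leq n$ are single points and can be dropped from the product without affecting integral equivalence. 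This gives
$$\F_{(\lambda,\aa)}^{\lim} \;\equiv\; \prod_{i=1}^{\ell(\lambda)} a_i\Delta_{\lambda_i},$$
which is the first assertion of the theorem.

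For the volume formula, I would use two standard facts. The $d$-dimensional scaled simplex $a\Delta_d$ has normalized volume $a^d$, since $\Delta_d$ has normalized volume $1$ and scaling by $a$ multiplies the (un-normalized) volume by $a^d$ without changing the dimension. The normalized volume of a Cartesian product of polytopes of dimensions $d_1,\ldots,d_k$ satisfies
$$\vol(P_1 \times \cdots \times P_k) \;=\; \binom{d_1+\cdots+d_k}{d_1,\ldots,d_k}\prod_{i=1}^{k}\vol(P_i),$$
which comes from the fact that the (ordinary) volume of the product is the product of ordinary volumes, combined with the conversion $\vol = d!\cdot(\text{ordinary volume})$ for a $d$-dimensional polytope. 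Specializing to our product of scaled simplices with $d_i=\lambda_i$ and $\vol(a_i\Delta_{\lambda_i})=a_i^{\lambda_i}$ yields exactly the stated formula.

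There is essentially no obstacle in this argument: the combinatorial content of the theorem is already concentrated in Lemmas \ref{lem:product} and \ref{lem:simplices}, and the remaining step is a routine volume bookkeeping. The one small thing to be careful about is verifying that the integral-equivalence relation indeed absorbs the point factors without changing the affine-span lattice structure on either side; this is automatic, as each $\F_{G_i}(\aa)$ for $i>\ell(\lambda)$ is the singleton flow sending $a_i$ directly to the sink, contributing no new coordinates to the affine span.
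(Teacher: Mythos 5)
Your proof is correct and follows essentially the same route as the paper: both reduce the structural claim to Lemmas \ref{lem:product} and \ref{lem:simplices} and then finish with routine volume bookkeeping, the only cosmetic difference being that you carry normalized volumes through a multinomial product formula while the paper computes the unnormalized volume first and multiplies by $\left(\sum_i \lambda_i\right)!$ at the end.
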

\begin{proof}
    It is immediate from Lemmas \ref{lem:product} and \ref{lem:simplices} that 
    $\F_{(\lambda, \aa)}^{\lim}$ is integrally equivalent to $\prod_{i \in [\ell(\lambda)]} a_i \Delta_{\lambda_i}$.
    The unnormalized volume of $a_i\Delta_{\lambda_i}$ is $\frac{a_i^{\lambda_i}}{\lambda_i!}$, so the unnormalized volume of $\F_{(\lambda, \aa)}^{\lim}$ is $\prod_{i \in \ell(\lambda)} \frac{a_i^{\lambda_i}}{\lambda_i !}$. To normalize this volume, we divide it by the volume of the standard simplex of dimension $\dim \F_{(\lambda, \aa)}^{\lim}$. $\F_{(\lambda, \aa)}^{\lim}$ has dimension $\sum \lambda_i$, so our ending expression is 
    \[ \vol \F_{(\lambda, \aa)}^{\lim} = \left(\sum_{i \in \ell(\lambda)} \lambda_i\right)!\prod_{i \in \ell(\lambda)} \frac{a_i^{\lambda_i}}{\lambda_i!} \]
    where $\vol \F_{(\lambda, \aa)}^{\lim}$ denotes the normalized volume of $\F_{(\lambda, \aa)}^{\lim}$.
\end{proof}

We note that we can relax the requirement $\aa \in (\Z_{> 0})^n$ to $\aa \in (\Z_{\geq 0})^n$ and obtain similar results. Indeed, both  Lemmas \ref{lem:product}  and  \ref{lem:simplices} and their proofs hold verbatim (if $a_i=0$ then $\F_{G_i}(\aa)$ is a point). Thus, the analogues of Lemmas 
\ref{lem:product}  and \ref{lem:simplices} yield a volume formula for any $ \F_{(\lambda, {\bf a})}^{\lim}$,  $\aa \in (\Z_{\geq 0})^n$. For simplicity, we will work with $\aa \in (\Z_{> 0})^n$ throughout the paper.

  \subsection{Constant Term Identities}
{   
 
    \renewcommand{\i}{\mathbf{i}}
    \newcommand{\x}{\mathbf{x}}
    Using the volume formula given in Theorem \ref{thm:vol}, we can derive a   constant term identity.
Let $\lambda$ be a partition, $n$ an integer such that $n-i \geq \lambda_i$ for all $i \in [\ell(\lambda)]$, and $\aa \in \Z_{>0}^n$.
For convenience, let $L = \sum_{i \in [\ell(\lambda)]} \lambda_i$ and let $G$ be the restriction of $G(\lambda, n)$ to the vertex set $[n]$.
Further, let $\bar\lambda = (\lambda_1, \ldots, \lambda_{\ell(\lambda)}, 0, \ldots, 0,0) \in \Z^n$.
 
 \begin{theorem}
    \label{thm:ct}
    $$CT_{x_n} \ldots \CT_{x_1}
	(a_1 x_1 + \cdots + a_n x_n)^L
	\prod_{i \in [\ell(\lambda)]} \prod_{n + 1 - \lambda_i \leq j \leq n} (x_i - x_j)^{-1}]= \left( \sum_{i \in [\ell(\lambda)]}  \lambda_i\right) ! \prod_{i \in [\ell(\lambda)]} \frac{ a_i^{\lambda_i}}{\lambda_i !}$$

    \end{theorem}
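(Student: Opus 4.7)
The plan is to derive this constant term identity by combining the Baldoni--Vergne Lidskii formula (\ref{eq:vol}) with the generating series (\ref{Kgs}) for the Kostant partition function, and then invoking Theorem \ref{thm:vol} to evaluate the resulting normalized volume.

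First I would apply (\ref{eq:vol}) with $G = G(\lambda, n)$. Since $G$ has $L + n$ edges we have $N - n = L$, and since the outdegree of vertex $i$ is $\lambda_i + 1$ for $i \in [\ell(\lambda)]$ and $1$ otherwise, the statistic $t_i^G$ equals $\bar\lambda_i$. The edges of $G'$, the restriction of $G$ to $[n]$, are precisely the pairs $(i,j)$ with $i \in [\ell(\lambda)]$ and $n+1-\lambda_i \le j \le n$, matching the product appearing in the statement. Lidskii then reads
\begin{equation*}
\vol \F_{G(\lambda,n)}(\aa) = \sum_{\mathbf{i} \models L} \binom{L}{\mathbf{i}} a_1^{i_1}\cdots a_n^{i_n}\, K_{G'}(\mathbf{i} - \bar\lambda).
\end{equation*}

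Next I would use (\ref{Kgs}) to write $K_{G'}(\mathbf{i} - \bar\lambda)$ as the coefficient of $\mathbf{x}^{\mathbf{i}-\bar\lambda}$ in $F(\mathbf{x}) := \prod_{(i,j) \in E(G')} (1 - x_i x_j^{-1})^{-1}$. Pulling the sum over $\mathbf{i}$ inside the coefficient extraction and using $\sum_{\mathbf{i}\models L} \binom{L}{\mathbf{i}} \prod_k (a_k/x_k)^{i_k} = (\sum_k a_k/x_k)^L$ turns the Lidskii sum into the constant term
\begin{equation*}
\vol \F_{G(\lambda,n)}(\aa) = \CT_{\mathbf{x}} \Bigl( \textstyle\sum_k a_k x_k^{-1}\Bigr)^L \mathbf{x}^{\bar\lambda}\, F(\mathbf{x}).
\end{equation*}
The variable substitution $x_k \mapsto x_k^{-1}$ (which preserves the constant term) transforms $F(\mathbf{x})$ into $\prod (1 - x_j x_i^{-1})^{-1} = \prod \tfrac{x_i}{x_i - x_j}$. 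The numerators collect as $\prod_{i \in [\ell(\lambda)]} x_i^{\lambda_i} = \mathbf{x}^{\bar\lambda}$ (vertex $i$ has outdegree $\lambda_i$ in $G'$), which exactly cancels the factor $\mathbf{x}^{-\bar\lambda}$ produced by the substitution. What remains is
\begin{equation*}
\vol \F_{G(\lambda,n)}(\aa) = \CT_{\mathbf{x}} \Bigl(\textstyle\sum_k a_k x_k\Bigr)^L \prod_{(i,j) \in E(G')} (x_i - x_j)^{-1},
\end{equation*}
which is the left-hand side of the claimed identity. Substituting the value of $\vol \F_{G(\lambda,n)}(\aa)$ supplied by Theorem \ref{thm:vol} then completes the proof.

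The main subtlety is verifying that the iterated constant term $\CT_{x_n}\cdots\CT_{x_1}$ corresponds to the Laurent expansion $(x_i - x_j)^{-1} = \sum_{k \geq 0} x_j^k\, x_i^{-k-1}$ implicit in the manipulation; since $i < j$ on every edge of $G'$ and the constant terms are taken in order starting from $x_1$, this is the natural expansion to use and so the chain of equalities is unambiguous.
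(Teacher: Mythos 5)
Your proposal is correct and follows essentially the same route as the paper: apply the Lidskii formula \eqref{eq:vol} to $G(\lambda,n)$, rewrite the Kostant partition function as a coefficient extraction via the generating series \eqref{Kgs}, collapse the sum over compositions with the multinomial theorem, and substitute the volume from Theorem \ref{thm:vol}. The only (cosmetic) difference is that you handle the sign of the exponent by inverting the variables $x_k \mapsto x_k^{-1}$, whereas the paper reverses the edges of the graph and uses $K_G(\mathbf{b}) = K_{G'}(-\mathbf{b})$; these are equivalent manipulations.
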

\begin{proof}
    By Equation \eqref{eq:vol}, the volume of $\F_{(\lambda, \aa)}^{\lim}$ is equal to 
    \begin{equation*}
	\vol \F_{(\lambda, \aa)}^{\lim} 
	= 
	\sum_{\substack{\i \vDash L \\ \ell(\i) = n}} 
	\binom{L}{i_1, \ldots, i_n}
	\left( \prod_{j \in [n]} a_j^{i_j} \right) 
	K_G(\i - \bar\lambda).
    \end{equation*}
    Now, let $G'$ be $G$ with all its edges reversed and observe that $K_G(\i - \bar\lambda) = K_{G'}(\bar\lambda - \i)$. Thus, the above is equal to
    \begin{align*}
	=& 
	\sum_{\substack{\i \vDash L \\ \ell(\i) = n}} 
	\binom{L}{i_1, \ldots, i_n}
	\left( \prod_{j \in [n]} a_j^{i_j} \right) 
	K_{G'}(\bar\lambda - \i) \\
	=& 
	\sum_{\substack{\i \vDash L \\ \ell(\i) = n}} 
	\binom{L}{i_1, \ldots, i_n}
	\left( \prod_{j \in [n]} a_j^{i_j} \right) 
	[\x^{\bar\lambda - \i}] \prod_{(i,j) \in E(G)} (1 - x_j x_i^{-1})^{-1} \\
	=& 
	\CT_{x_n} \ldots \CT_{x_1}
	\sum_{\substack{\i \vDash L \\ \ell(\i) = n}} 
	\binom{L}{i_1, \ldots, i_n}
	\left( \prod_{j \in [n]} a_j^{i_j} \right) 
	\x^{\i - \bar\lambda} \prod_{(i,j) \in E(G)} (1 - x_j x_i^{-1})^{-1}.
    \end{align*}
    Since the $i$th vertex of $G$ has $\bar\lambda_i$ edges out of it, 
    $\prod_{(i,j) \in E(G)} (1 - x_j x_i^{-1})^{-1} = \x^{\bar\lambda} \prod_{(i,j) \in E(G)} (x_i - x_j)^{-1}$. It follows that the above is equal to
    \begin{align*}
	=& \CT_{x_n} \ldots \CT_{x_1}
	\sum_{\substack{\i \vDash L \\ \ell(\i) = n}} 
	\binom{L}{i_1, \ldots, i_n}
	\left( \prod_{j \in [n]} a_j^{i_j} \right) 
	\x^{\i} \prod_{(i,j) \in E(G)} (x_i - x_j )^{-1} \\
	=& \CT_{x_n} \ldots \CT_{x_1}
	(a_1 x_1 + \cdots + a_n x_n)^L
	\prod_{(i,j) \in E(G)} (x_i - x_j )^{-1}
    \end{align*}
    where the latter equality follows by the Multinomial Theorem.
   The   product in the last expression can be rewritten as  $\prod_{i \in [\ell(\lambda)]} \prod_{n + 1 - \lambda_i \leq j \leq n} (x_i - x_j)^{-1}$. 
    Finally, substituting in the formula for $\vol \F_{(\lambda, \aa)}^{\lim}$ given in Theorem \ref{thm:vol} yields the result.
\end{proof}
}
 
\section{The face structure of  polytopes in $\F_{(\lambda, {\bf a})}$}\label{sec:faces}
 
In Theorem \ref{thm:vol}, we showed that for all $\lambda$ and $\aa \in \mathbb{Z}^n_{>0}$, $\F_{(\lambda,\aa)}^{\lim}$ is integrally equivalent to a product of simplices, implying that its combinatorial type is that of a product of simplices.
In this section, we show that each element of the family $\F_{(\lambda, \aa)}^{\lim}$ has the same combinatorial type as $\F_{(\lambda,\aa)}^{\lim}$.

\subsection{A quick review of results relating subgraphs and the face lattice}
Before proceeding, we will review some facts relating the face lattice of a flow polytope to subgraphs of the graph from which it arrises.

Let $G$ be a graph and $\aa$ a netflow vector. We call a subgraph $H$ of $G$ \textbf{$\aa$-regular} (or just \textbf{regular} when the netflow in question is clear) if there is an $\aa$-flow $f$ on $G$ such that $f$ is zero on all edges of $G$ that are not in $H$. 
We say that $\aa$ is in \textbf{generic position} with respect to $G$ if there is no $\aa$-flow $f$ such that $f$ is the unique flow on two distinct subtrees of $G$.
The following two results are implied by \cite[Lemma 2.1 \& Theorem 2.2]{hille} for the faces of $\F_{G_{(\lambda, n)}}(\aa)$.  
 
\begin{lemma} 
    \label{lem:hille-verts}
    The vertices of $\F_{G_{(\lambda, n)}}(\aa)$ are the flows on the regular subtrees of ${G_{(\lambda, n)}}$.
\end{lemma}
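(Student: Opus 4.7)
The plan is to derive this lemma directly from Hille's Lemma 2.1, which is cited just before the statement and says that the vertices of any flow polytope $\F_G(\aa)$ are exactly the $\aa$-flows whose supports are acyclic subgraphs of $G$. The entire argument is a translation of this general fact into the ``regular subtree'' language of this section, via the two implications that make up the claim.

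First I would take a vertex $f$ of $\F_{G(\lambda,n)}(\aa)$, and by Hille's lemma conclude that its support $H := \{ e \in E(G(\lambda,n)) : f(e) > 0 \}$ is acyclic. Since $f$ is an $\aa$-flow that vanishes off $H$, the subgraph $H$ is $\aa$-regular by definition. Moreover, because the edge vectors $\ee_i - \ee_j$ indexed by an acyclic subgraph are linearly independent, the netflow equations \eqref{eqn:flowA} determine $f$ uniquely on $H$, so $H$ is a regular subtree in the sense used here (a forest being treated as a disjoint union of subtrees in the conventions of the section).

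Conversely, I would take a regular subtree $T$ of $G(\lambda,n)$. By definition there is an $\aa$-flow $f_T$ supported on $T$, and since $T$ is acyclic this flow is the unique solution to \eqref{eqn:flowA} when restricted to $E(T)$. Extending $f_T$ by zero on $E(G(\lambda,n)) \setminus E(T)$ produces an $\aa$-flow on $G(\lambda,n)$ whose support lies inside $T$, hence is acyclic, so Hille's Lemma 2.1 identifies it as a vertex of $\F_{G(\lambda,n)}(\aa)$.

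The main obstacle is purely bookkeeping: one must confirm that the paper's notion of a ``regular subtree'' matches the maximal acyclic $\aa$-regular subgraphs carrying a unique flow, which in $G(\lambda,n)$ (where every vertex has an outgoing edge to the sink $n+1$) is natural once phrased in terms of the spanning forest structure inherited from the support. Once the conventions are aligned, the lemma is a direct corollary of Hille's theorem and requires no new computation.
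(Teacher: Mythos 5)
Your proposal is correct and takes essentially the same route as the paper, which states this lemma without proof as an immediate consequence of Hille's Lemma 2.1 (vertices are exactly the $\aa$-flows with acyclic support); your two implications simply spell out that citation. The one point worth tightening is the forward direction: the support of a vertex is an acyclic subgraph that may be a forest rather than a single subtree, but since every vertex of $G(\lambda,n)$ has an edge to the sink, the support extends to a spanning subtree of $G(\lambda,n)$ on which the given flow still vanishes outside the support, so that subtree is regular and carries the vertex as its unique flow.
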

\begin{theorem} 
    \label{thm:hille-faces}
     If $\aa$ is in generic position, then the regular subtrees of ${G_{(\lambda, n)}}$ are in bijection with the vertices of $\F_{G_{(\lambda, n)}}(\aa)$ and the faces of $\F_{G_{(\lambda, n)}}(\aa)$ are in bijection with the regular subgraphs of ${G_{(\lambda, n)}}$.
\end{theorem}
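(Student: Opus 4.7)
The plan is to read off both bijections from the linear description of $\F_{G(\lambda,n)}(\aa)$ as the set $\{f \in \R^E_{\geq 0} : M_G f = \aa\}$. Since the only inequality constraints are $f(e)\geq 0$, every face $F$ is cut out by a subset $S_F = \{e : f(e) = 0 \text{ for all } f\in F\}$ of edges, so $F$ equals the solution set of $M_G f=\aa$ together with $f(e)=0$ for $e\in S_F$ and $f(e)\geq 0$ for $e\notin S_F$. The complementary subgraph $H_F := (V(G), E\setminus S_F)$ is then the support of any relative interior point of $F$, and is $\aa$-regular by the very definition of regularity.

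First I would set up the map $F\mapsto H_F$ from faces to $\aa$-regular subgraphs, and conversely $H\mapsto F_H := \{f\in \F_{G(\lambda,n)}(\aa) : f(e)=0 \text{ for } e\notin E(H)\}$, which is a face whenever $H$ is $\aa$-regular. The identity $F_{H_F}=F$ is immediate from the definition. For the reverse identity $H_{F_H}=H$, I would argue that under generic position every edge of $H$ carries strictly positive flow for some $f\in F_H$: otherwise the zero-set could be enlarged, exhibiting a strictly smaller regular subgraph yielding the same face, and by taking a minimal such witness one would in the tree case produce two distinct regular subtrees sharing a common unique flow, contradicting genericity.

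For the vertex statement I would specialize the above. A vertex of $\F_{G(\lambda,n)}(\aa)$ is a $0$-dimensional face, which by the cited Lemma~\ref{lem:hille-verts} is a flow whose support is acyclic; combined with $M_G f = \aa$ and the fact that every vertex of $G(\lambda,n)$ is connected to the sink, the support is forced to be a maximal acyclic $\aa$-regular subgraph, i.e.\ a regular (spanning-type) subtree in the sense of the paper. The columns of $M_G$ indexed by the edges of a tree are linearly independent, so $M_G f=\aa$ has a unique nonnegative solution supported on a given regular subtree $T$, giving exactly one vertex per $T$. Generic position is precisely the hypothesis that the assignment $T\mapsto \text{(unique flow on }T\text{)}$ is injective, completing the vertex bijection.

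The main obstacle will be cleanly formulating and applying genericity in the face-level bijection, since one must rule out the pathology of two distinct $\aa$-regular subgraphs sharing the same positive-support flow set, which would collapse faces together. Note that the face bijection is strictly stronger than the vertex bijection, which only requires uniqueness on trees; in a fully written proof I would invoke \cite[Lemma 2.1 and Theorem 2.2]{hille} directly once the translation between Hille's language and the notions of $\aa$-regular subgraph/subtree used here has been made explicit for the graphs $G(\lambda,n)$.
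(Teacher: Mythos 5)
The paper never actually proves this statement: it is imported wholesale from \cite[Lemma 2.1 \& Theorem 2.2]{hille}, so your closing plan to invoke Hille directly is exactly what the authors do, and to that extent your proposal matches the paper. Your treatment of the vertex bijection is also sound on its own: the columns of $M_G$ indexed by the edges of a tree are linearly independent, so a regular subtree supports a unique flow, which is a $0$-dimensional face, and the paper's genericity condition is by definition the injectivity of the assignment $T\mapsto(\text{unique flow on }T)$.

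Your supplementary direct argument for the face-level bijection, however, has a genuine gap at the step $H_{F_H}=H$. The maps $F\mapsto H_F$ and $H\mapsto F_H$ do give a surjection from regular subgraphs onto nonempty faces, but injectivity requires that every edge of a regular subgraph $H$ carry strictly positive flow at some point of $F_H$; since a relative interior point of $F_H$ is a positive convex combination of the vertices of $F_H$, this is equivalent to every edge of $H$ lying in some regular subtree contained in $H$ whose unique flow is positive on that edge. Your proposed reduction --- take a minimal witness and produce two distinct regular subtrees sharing a common unique flow --- does not follow: if $H'\subsetneq H$ are both regular with $F_{H'}=F_H$, no such pair of subtrees is exhibited. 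The obstruction is an edge of $H$ contained in no positively-flowed subtree inside $H$, and the genericity condition as defined in the paper (no single flow realized by two distinct subtrees) does not forbid this. For the graphs $G(\lambda,n)$ with $\aa\in\Z_{>0}^n$ the gap is closed by the structure of the graph: one outgoing edge can be chosen at each non-sink vertex so as to include any prescribed edge of $H$, and the resulting tree carries strictly positive flow on every edge; this is precisely what Lemmas \ref{lem:tree} and \ref{lem:regular} establish later in the paper. If you want a self-contained argument rather than the citation, you should prove that fact for $G(\lambda,n)$ instead of appealing to genericity at this step.
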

\subsection{Characterization of regular subtrees}
    Let $\lambda = (\lambda_1, \ldots, \lambda_\ell)$ be a partition, $n$ an integer such that $n-i \geq \lambda_i$ for all $i \in [\ell(\lambda)]$, $G = G(\lambda, n)$, and let $\a \in \Z_{>0}^n$.
    In this section, we  characterize which subtrees of $G$ are $\a$-regular for $\a \in \Z_{>0}^n$.
\begin{lemma} \label{lem:tree}
    Let $H$ be a subgraph of $G$ built by picking one outgoing edge from each vertex $i < n+1$. Then $H$ is an $\a$-regular spanning tree of $G$ for $\a \in \Z_{>0}^n$.
\end{lemma}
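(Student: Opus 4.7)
The plan is to verify two things in turn: that $H$ is a spanning tree, and that there exists a nonnegative flow on $G$ supported on $H$ realizing the netflow $\a$.

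For the spanning tree claim, I would first note that $H$ has exactly $n$ edges (one chosen per vertex in $[n]$) on $n+1$ vertices, so it suffices to show $H$ is connected. Since every edge of $G$ goes from a smaller-indexed vertex to a larger-indexed one, starting at any vertex $i \in [n]$ and iteratively following the unique outgoing edge in $H$ yields a strictly increasing sequence of vertex labels; this sequence must terminate, and the only vertex in $[n+1]$ with no outgoing edge is the sink $n+1$. Hence every vertex is connected to $n+1$ in $H$, so $H$ is connected and thus a spanning tree.

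For the regularity claim, I would construct the flow $f$ on $H$ inductively by processing vertices in order of their index. Let $e_i$ denote the unique outgoing edge of vertex $i$ in $H$, for $i \in [n]$. Flow conservation at vertex $i$ forces
\[ f(e_i) \;=\; a_i + \sum_{\substack{j < i \\ (j,i) \in E(H)}} f(e_j), \]
and since all the edges $e_j$ with $j < i$ have been determined at the previous stages, this uniquely defines $f(e_i)$. By induction on $i$, using $a_i > 0$ and $f(e_j) \geq 0$ for $j < i$, we obtain $f(e_i) > 0$. Extending $f$ by zero on $E(G) \setminus E(H)$ yields a nonnegative flow on $G$ satisfying $M_G f = \a'$ (where $\a' = (a_1, \ldots, a_n, -\sum a_i)$), supported on $H$. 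This exhibits $H$ as $\a$-regular.

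There is no substantive obstacle here; the argument is essentially the observation that on a directed tree where each internal vertex has a unique outgoing edge, the flow equations are triangular with respect to vertex order and can be solved uniquely, with strict positivity of the $a_i$ propagating to strict positivity of every $f(e_i)$.
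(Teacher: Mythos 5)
Your proof is correct and takes essentially the same approach as the paper: both establish that $H$ is a tree via the edge count $n$ on $n+1$ vertices and then build the flow inductively in vertex order, using positivity of the $a_i$ to get nonnegativity. The only cosmetic difference is that you prove connectivity directly (by following outgoing edges to the sink) and let the edge count give acyclicity, whereas the paper proves acyclicity directly (via the minimal vertex of a would-be cycle) and lets the edge count give connectivity.
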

\begin{proof}
    First we  show that $H$ is acyclic and connected. If there were a cycle $C \subset H$, then there would have to be two outgoing edges from its minimal vertex. Thus, $H$ is acyclic.  To see that $H$ is connected, we note that our graph has $n$ edges, $n+1$ vertices and it has no cycles. 

    To see that $H$ is regular, construct an $\a$-flow on it as follows. Let $e_v \in E(H)$ be the unique edge out of $v$ in $H$. Let $f(e_1) = a_1$. Assume $f(e_1), \ldots, f(e_{i})$ have been assigned for some $i \geq 1$. Then we let   $f(e_{i+1}) = a_{i+1}$ if $i+1$ has no incoming edges in $H$, and  $f(e_{i+1}) = a_{i+1} + \sum_{(v,i+1) \in E(H)} f(e_v)$ for vertices with incoming edges.
\end{proof}
\begin{lemma} \label{lem:oneedge}
    Every spanning subtree $T$ of $G$ that admits an $\a$-flow for $\a \in \Z_{>0}^n$ has a single edge out of each of its vertices $v < n+1$.
\end{lemma}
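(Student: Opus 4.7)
The proof I have in mind is essentially a counting argument combined with flow conservation.

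First I would observe that, since $G = G(\lambda,n)$ has $n+1$ vertices, any spanning subtree $T$ has exactly $n$ edges. Since every edge of $G$ goes from some $i$ to some $j > i$, every edge of $T$ is an outgoing edge at some vertex $v \in [n]$. Thus the sum over $v \in [n]$ of the outdegree of $v$ in $T$ equals $n$.

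Next I would show that each vertex $v \in [n]$ has \emph{at least} one outgoing edge in $T$. Suppose $f$ is an $\aa$-flow supported on $T$, and suppose for contradiction that some $v \in [n]$ has no outgoing edges in $T$. Then the flow conservation equation \eqref{eqn:flowA} at $v$ reads
\[ \sum_{(u,v) \in E(T)} f(u,v) + a_v = 0. \]
Since $f$ is nonnegative on every edge and $a_v > 0$, this is impossible. Hence every vertex $v \in [n]$ has outdegree at least one in $T$.

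Combining the two observations, the $n$ vertices $v \in [n]$ account for at least $n$ outgoing edges in $T$, but $T$ has only $n$ edges in total, so each $v \in [n]$ has outdegree exactly one in $T$, which is the claim. There is no significant obstacle here; the lemma is really just a bookkeeping consequence of the facts that $|E(T)| = n$, all edges of $G$ are oriented ``upward'', and netflows are strictly positive.
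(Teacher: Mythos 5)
Your proof is correct and takes essentially the same approach as the paper: both arguments combine the count $|E(T)|=n$ with the observation that a vertex $v\in[n]$ of outdegree zero would violate flow conservation since $a_v>0$. The only difference is organizational — you establish outdegree at least one directly and then count, whereas the paper assumes some vertex has outdegree at least two and derives a vertex of outdegree zero — so there is nothing substantive to distinguish the two.
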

\begin{proof}
Suppose that a spanning subtree $T$ of $G$ has at least two outgoing edges from a vertex $v$. Since $T$ has $n$ edges and $n+1$ vertices,  it follows then that $T$ has two vertices with no outgoing edges. In particular, there is a $v<n+1$ with only incoming edges. Since $a_v>0$, such a tree cannot admit an $\a$-flow. Thus each spanning subtree $T$ of $G$ that admits an $\a$-flow has at most one edge out of each of its vertices $v < n+1$. Since we need $n$ edges, it has exactly one edge out of each of its vertices $v < n+1$.
\end{proof}
\begin{theorem}\label{thm:vertexcount}
    The $\a$-regular subtrees for $\a \in \Z_{>0}^n$ of $G(\lambda, n)$ are precisely those that have exactly one edge out of every vertex.
    The polytope $\F_{G(\lambda, n)}(\aa)$ has $\prod_{i = 1}^{\ell(\lambda)} (\lambda_i + 1)$ vertices, independent of $n$, corresponding to the $\a$-flows on the aforementioned subtrees.
\end{theorem}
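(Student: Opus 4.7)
The plan is to deduce the first assertion immediately from Lemmas \ref{lem:tree} and \ref{lem:oneedge}: Lemma \ref{lem:oneedge} supplies the ``only if'' direction (every $\a$-regular spanning subtree has exactly one outgoing edge from each vertex $v < n+1$) and Lemma \ref{lem:tree} supplies the ``if'' direction (any subgraph built by choosing one outgoing edge per non-sink vertex is already an $\a$-regular spanning tree). I would combine them in a single sentence stating the characterization and then move on to the count.

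Counting such subtrees reduces to making one independent choice per vertex $i \in [n]$. A vertex $i \in [\ell(\lambda)]$ has exactly $\lambda_i + 1$ outgoing edges in $G(\lambda, n)$ --- namely the $\lambda_i$ edges coming from the Young diagram of $\lambda$ together with the edge to the sink $n+1$ --- while each $i$ with $\ell(\lambda) < i \leq n$ has only the edge to the sink and so contributes a factor of $1$. Multiplying yields $\prod_{i=1}^{\ell(\lambda)}(\lambda_i+1)$, a number visibly independent of $n$.

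It remains to match this count to the number of vertices of $\F_{G(\lambda,n)}(\a)$. By Lemma \ref{lem:hille-verts} every vertex is a flow supported on some regular subtree, so it suffices to show that the map sending a regular subtree $H$ to the unique $\a$-flow $f_H$ constructed in the proof of Lemma \ref{lem:tree} is injective. I expect this to be the only delicate step, since a priori two distinct trees could share a flow with some edge carrying a zero value. The escape is the hypothesis $\a \in \Z_{>0}^n$: the inductive formula defining $f_H$ exhibits each $f_H(e_v)$ as a positive integer combination of entries of $\a$, so $f_H$ is strictly positive on every edge of $H$, and $H$ can be recovered as the support of $f_H$. Hence $H \mapsto f_H$ is injective, which yields the desired bijection and the vertex count.
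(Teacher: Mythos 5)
Your proposal is correct and follows essentially the same route as the paper: the characterization from Lemmas \ref{lem:tree} and \ref{lem:oneedge}, the same edge count per vertex, and the same key observation that positivity of $\aa$ forces the flow on a regular subtree to be strictly positive on all its edges, so the tree is recoverable as the support of its flow. The paper packages that last observation as the statement that $\aa$ is in generic position and then cites Theorem \ref{thm:hille-faces}, whereas you phrase it as injectivity of $H \mapsto f_H$ combined with Lemma \ref{lem:hille-verts}; these are the same argument.
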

\begin{proof}
    The first statement follows immediately from  Lemmas \ref{lem:tree} and \ref{lem:oneedge}. 
    We can count such trees by noting that each vertex $i$ has $1 + \lambda_i$ edges out of it (and 1 edge if $i > \ell$), so there are $\prod_{i = 1}^\ell (\lambda_i + 1)$ ways to choose such a tree.

    Now, note that if $T$ is a spanning tree of $G$ that admits a flow $f_T$, then $f_T$ must be nonzero on every edge of $T$ because $\a$'s non-sink entries are all positive. Thus, $\a$ is in generic position and by Lemma \ref{thm:hille-faces}
 there is a bijection between subtrees of $G$ that admit regular flows and vertices of $\F_G(\a)$.
\end{proof}
\subsection{The face lattice}
We are ready  to show that every polytope in $\F_{(\lambda, \aa)}$ has a face lattice isomorphic to that of $\F_{(\lambda, \aa)}^{\lim}$.

\begin{lemma}
    \label{lem:regular}
    Let $\lambda$ be a partition, $n$ an integer such that $n-i \geq \lambda_i$ for all $i \in [\ell(\lambda)]$, and $\aa \in \Z_{>0}^n$.
    The regular subgraphs of $G(\lambda, n)$ are precisely those that have at least one edge out of every non-sink vertex.
Furthermore, for $H$ and $K$ regular subgraphs of $G(\lambda, n)$, $\F_K(\aa) \subset \F_H(\aa)$ if and only if $K$ is a subgraph of $H$. Thus,  the face lattice of $\F_{G(\lambda, n)}(\aa)$ is isomorphic to the poset of regular subgraphs of $G(\lambda, n)$.
\end{lemma}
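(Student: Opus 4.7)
The plan is to first prove the characterization of regular subgraphs, then extract the face lattice isomorphism from Theorem \ref{thm:hille-faces}. For the characterization, the forward direction is essentially flow conservation: if $f$ is an $\aa$-flow supported on $E(H)$ and $v$ is a non-sink vertex, then
\[
\sum_{(v,j) \in E(G)} f(v,j) = a_v + \sum_{(u,v) \in E(G)} f(u,v) \geq a_v > 0,
\]
so some outgoing edge at $v$ has positive flow and must lie in $E(H)$. For the reverse direction, if $H$ has at least one outgoing edge at every non-sink vertex, choose one such edge at each non-sink vertex to form a spanning subtree $T \subseteq H$; Lemma \ref{lem:tree} says $T$ is $\aa$-regular, and extending its flow by zero on $E(G) \setminus E(T)$ produces an $\aa$-flow supported in $H$.

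Next, I would establish the order-preserving correspondence $\F_K(\aa) \subseteq \F_H(\aa) \iff K \subseteq H$ for regular subgraphs $K,H$. One direction is immediate: any flow supported on $E(K)$ is supported on $E(H)$ whenever $K \subseteq H$. For the converse, fix an edge $e = (i,j) \in E(K)$ and build a spanning subtree $T \subseteq K$ containing $e$ by taking $e$ to be the outgoing edge at $i$ and choosing any outgoing edge of $K$ at each other non-sink vertex (available since $K$ is regular). The flow $f_T$ on $T$ from the proof of Lemma \ref{lem:tree} is \emph{strictly} positive on every edge of $T$: inductively, the outflow on $T$'s unique outgoing edge at $v$ equals $a_v$ plus the inflow at $v$, which is positive since $a_v > 0$. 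Extending $f_T$ by zero places it in $\F_K(\aa) \subseteq \F_H(\aa)$, forcing $e \in E(H)$, and hence $K \subseteq H$.

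Finally, since $\aa \in \Z_{>0}^n$ puts $\aa$ in generic position (as observed in the proof of Theorem \ref{thm:vertexcount}), Theorem \ref{thm:hille-faces} yields a bijection between regular subgraphs of $G(\lambda,n)$ and faces of $\F_{G(\lambda,n)}(\aa)$; combined with the order-preserving correspondence above, this supplies the claimed poset isomorphism. I expect the delicate step to be the strict positivity of $f_T$ on every edge of $T$: it is precisely this point which ensures that inclusion of faces genuinely encodes edge inclusion rather than being a spurious containment, and it relies essentially on the hypothesis $\aa \in \Z_{>0}^n$ rather than merely $\aa \in \Z_{\geq 0}^n$.
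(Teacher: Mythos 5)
Your proposal is correct and follows essentially the same route as the paper: characterize regular subgraphs via positivity of $\aa$ and Lemma \ref{lem:tree}, prove the ``only if'' direction of the inclusion correspondence by exhibiting a regular spanning subtree of $K$ through a given edge $e$ whose (unique, strictly positive) flow cannot lie in $\F_H(\aa)$, and then invoke Theorem \ref{thm:hille-faces} using genericity of $\aa$. The only difference is cosmetic: you spell out the inductive argument for strict positivity of the flow on the subtree, whereas the paper defers that observation to the proof of Theorem \ref{thm:vertexcount}.
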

\begin{proof}
    The entries of $\aa$ are all positive, so every vertex of a regular subgraph must have at least one outgoing edge. Conversely, any subgraph that has at least one edge out of every non-sink vertex contains a regular subtree by Theorem \ref{thm:vertexcount} and is therefore regular.

    For the second statement, the ``if'' implication is clear. For the ``only if'', observe that if $e$ is an edge in $K$ that is not in $H$, then there is a regular subtree $T$ contained in $K$ such that $e \in E(T)$.  Since $\aa$ is in generic position, the unique flow $f$ on $T$ is nonzero on $e$ and is therefore not in $\F_H(\aa)$, so $\F_K(\aa) \not\subset \F_H(\aa)$. The last statement then follows by Lemma \ref{thm:hille-faces} since $\a$ is in generic position.
\end{proof}
\begin{theorem} \label{thm:faces} 
    Let  $\lambda=(\l_1, \ldots, \l_k)$ be a partition,  $n$ an integer such that $n-i \geq \lambda_i$ for all $i \in [\ell(\lambda)]$, and $\aa \in \Z_{>0}^n$ a netflow vector.
The face posets of the  polytopes belonging to  $\F_{(\lambda, {\bf a})}$ are isomorphic to the face poset of the Cartesian product  of simplices
$\Delta_{\l_1} \times \Delta_{\l_2} \times \cdots \times \Delta_{\l_k}$. In particular, the $h$-polynomial of   the  polytopes belonging to  $\F_{(\lambda, {\bf a})}$ is  

\begin{equation*}
\sum_{i = 0}^{\sum_{i=1}^k \l_i} h_i x^i = \prod_{i=1}^k [\l_i]_x = \prod_{i=1}^k \left(\sum_{j=0}^{\l_i-1} x^j\right).
\end{equation*}
\end{theorem}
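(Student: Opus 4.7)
The plan is to reduce the claim to pure combinatorics by invoking Lemma \ref{lem:regular}, which identifies the face lattice of $\F_{G(\lambda,n)}(\aa)$ with the poset $\mathcal{R}_{\lambda,n}$ of subgraphs of $G(\lambda,n)$ containing at least one outgoing edge at every non-sink vertex, ordered by inclusion. I then want to show that $\mathcal{R}_{\lambda,n}$ is independent of $n$ (so long as $n-i \geq \l_i$) and isomorphic to the face poset of $\Delta_{\l_1} \times \cdots \times \Delta_{\l_k}$.

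First I would enumerate the outgoing edges at each non-sink vertex of $G(\lambda,n)$. Because the Young diagram is placed flush with the top and right of the $n \times n$ ambient matrix, vertex $i$ with $i \in [\ell(\lambda)]$ has exactly $\l_i + 1$ outgoing edges: the $\l_i$ edges $(i,j)$ with $n-\l_i+1 \leq j \leq n$ coming from $Y'$, together with the sink edge $(i,n+1)$. A vertex $i$ with $\ell(\lambda) < i \leq n$ has only the single outgoing edge $(i,n+1)$.

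The key observation is that the regularity condition ``at least one outgoing edge at each non-sink vertex'' is local and independent across vertices, so a regular subgraph is specified by choosing, for each $i \in [\ell(\lambda)]$, a nonempty subset of the $\l_i + 1$ outgoing edges at vertex $i$ (vertices $i > \ell(\lambda)$ contribute nothing since their unique outgoing edge is forced). This identifies $\mathcal{R}_{\lambda,n}$, as an inclusion poset, with $\prod_{i=1}^k (B_{\l_i+1} \setminus \{\emptyset\})$, where $B_m$ denotes the Boolean lattice on $m$ atoms. In particular the resulting poset is independent of $n$ and of $\aa$ (beyond positivity). Pairing nonempty subsets of the $\l_i + 1$ outgoing edges at vertex $i$ with nonempty subsets of the $\l_i + 1$ vertices of $\Delta_{\l_i}$ produces the face-poset isomorphism, since the nonempty faces of a Cartesian product of polytopes are precisely the products of nonempty faces of its factors (with the empty face adjoined on both sides as the unique minimum).

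The $h$-polynomial statement is then obtained by combining multiplicativity of the $h$-polynomial under Cartesian products of simple polytopes with the known $h$-polynomial of a simplex. I do not anticipate a real obstacle: once Lemma \ref{lem:regular} is available, the proof is essentially a careful bookkeeping argument identifying the regular-subgraph poset as a product of Boolean lattices with their bottom elements removed, and verifying that the bijection respects inclusion (which is immediate). The substantive content lies in Lemma \ref{lem:regular} and Theorem \ref{thm:vertexcount}, not in this final assembly step.
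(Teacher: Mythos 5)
Your proposal is correct and follows essentially the same route as the paper: both rest on Lemma \ref{lem:regular} and then identify the poset of regular subgraphs with a product over the rows of independent nonempty choices of outgoing edges (the paper packages this as the poset of box-subsets of the Young diagram of $(\lambda_1+1,\ldots,\lambda_k+1)$ meeting every row, which is exactly your $\prod_{i=1}^{k}\bigl(B_{\lambda_i+1}\setminus\{\emptyset\}\bigr)$). The only divergence is the final step, where you match this poset directly to the face poset of $\Delta_{\lambda_1}\times\cdots\times\Delta_{\lambda_k}$ via nonempty vertex subsets, while the paper routes through the limiting polytope and Theorem \ref{thm:vol}; this is a cosmetic difference.
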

\begin{proof}
    \newcommand{\ph}{\varphi}
    \newcommand{\C}{\mathcal{C}}
    \newcommand{\Reg}{\mathcal{R}}
    Let $\bar\lambda = (\lambda_1 + 1, \lambda_2 + 1, \ldots, \lambda_k + 1)$ and 
    let $Y$ be the Young diagram of $\bar\lambda$. Let $\C$ be the poset of subsets $C$ of the boxes of $Y$ such that $C$ contains at least one box from every row of $Y$, ordered by inclusion.

    For any integer $n$ such that $n-i \geq \lambda_i$ for all $i \in [\ell(\lambda)]$,  define the  bijection $\ph_n: Y \to E(G(\lambda, n))$ by $\ph_{n}(i,j) =(i,n+2-j)$. 
    Now, let $\Reg(G(\lambda, n))$ be the set of regular subgraphs of $G$ and define $\phi_n : \C \to \Reg(G(\lambda, n))$ by 
    \[ C \mapsto \Big( [n+1], \ph_n(C) \cup \setof{ (i,n+1)}{n+1 > i > \ell(\lambda)} \Big). \]
    Every subgraph in the image of $\phi_n$ has an edge out of every vertex besides $n+1$, so by Lemma \ref{lem:regular}, the image of $\phi_n$ lies in $\Reg(G(\lambda, n))$ as claimed. In fact, using the surjectivity of $\ph_n$,  the image of $\phi_n$ is all of $\Reg(G(\lambda, n))$.
    Finally, injectivity of $\phi_n$ follows from the fact that $\ph_n$ is injective, and it is clear that it preserves inclusion. Therefore, $\phi_n$ is an order preserving bijection between $\C$ and $\Reg(G(\lambda, n))$.
    Applying the second statement of Lemma \ref{lem:regular}, we have that for any   integer $n$ such that $n-i \geq \lambda_i$ for all $i \in [\ell(\lambda)]$, the face lattice of $\F_{G(\lambda, n)}(\aa)$ is isomorphic to $\C$.
In particular, the  face lattice of $\F_{(\lambda,\aa)}^{\lim}=\F_{G(\lambda, \lambda_1 + \ell(\lambda))}(\aa)$ is isomorphic to $\C$. The former is isomorphic to the face lattice of $\Delta_{\lambda_1} \times \Delta_{\lambda_2} \times \cdots \times \Delta_{\lambda_k}$ by Theorem \ref{thm:vol}.   It follows that every element of $\F_{(\lambda,\aa)}$ has a face lattice isomorphic to this one. Furthermore,  $\F_{(\lambda,\aa)}$ is of dimension $\sum_{i=1}^k \l_i$ and its $h$-polynomial is given by the products of the $h$-polynomials of the simplices: $h(\F_{(\lambda,\aa)}, x)=\prod_{i=1}^k h(\Delta_{\lambda_i},x)=\prod_{i=1}^k [\l_i]_x$, as desired.

 \end{proof}
\begin{remark}
    Since $\F_{K_{n+1}}({\bf 1})$ is an element of $\F_{( (n-1,n-2, \ldots, 1), {\bf 1})}$, Theorem \ref{increasing-simplices} is a special case of Theorem \ref{thm:faces}.
\end{remark}
 
\bibliography{biblio-kir}
\bibliographystyle{plain}

\end{document}